\definecolor{linkcolor}{rgb}{0,0,0.6}
\def\MYTITLE{Cohomology of the BT strata in the unramified unitary RZ space of signature $(1,n-1)$}
\title{Cohomology of the Bruhat-Tits strata in the unramified unitary Rapoport-Zink space of signature $(1,n-1)$}
\author{J.Muller}
\date{October 1, 2021} 
\renewcommand\tableofcontents{%
  \null\hfill\textbf{\Large\contentsname}\hfill\null\par
  \@mkboth{\MakeUppercase\contentsname}{\MakeUppercase\contentsname}%
  \@starttoc{toc}%
}
\newcommand{\zarabic}[1]{\ifnum\value{#1}=0 \else.\arabic{#1}\fi}
\titlespacing{\paragraph}{0pt}{10pt}{.1em}[]
\def\msquare{\mathord{\scalerel*{\Box}{gX}}}
\begin{document}

\fontsize{12pt}{17pt}\selectfont

%Paragraphs spacing
\setlength{\parindent}{2em} 
\setlength{\parskip}{0.5em}

%Theorem environments
\newtheorem*{theo}{Theorem}
\newtheorem*{prop}{Proposition}
\newtheorem*{lem}{Lemma}
\newtheorem*{corol}{Corollary}
\newtheorem*{conj}{Conjecture}

\theoremstyle{remark}
\newtheorem*{rk}{Remark}
\newtheorem*{rks}{Remarks}

\theoremstyle{definition}
\newtheorem*{defi}{Definition}
\newtheorem*{notation}{Notation}

%Shortcut
\newcommand{\lcal}{\mathcal{L}}

%Title and abstract

\maketitle

\begin{center}

\parbox{15cm}{\small
\textbf{Abstract} : \it In [Inventiones mathematicae, 184 (2011)], Vollaard and Wedhorn defined a stratification on the special fiber of the unitary unramified PEL Rapoport-Zink space with signature $(1,n-1)$. They constructed an isomorphism between the closure of a stratum, called a closed Bruhat-Tits stratum, and a Deligne-Lusztig variety which is not of classical type. In this paper, we describe the $\ell$-adic cohomology groups over $\overline{\mathbb Q_{\ell}}$ of these Deligne-Lusztig varieties, where $\ell \not = p$. The computations involve the spectral sequence associated to the Ekedahl-Oort stratification of a closed Bruhat-Tits stratum, which translates into a stratification by Coxeter varieties whose cohomology is known. Eventually, we find out that the irreducible representations of the finite unitary group which appear inside the cohomology contribute to only two different unipotent Harish-Chandra series, one of them belonging to the principal series.}

\vspace{0.5cm}
\end{center}

\tableofcontents

\newpage

\noindent Rapoport-Zink spaces are geometric objects which can be seen as deformation spaces for a $p$-divisible group equipped with additional structures. They are formal schemes over the ring of integers of a $p$-adic field, and they are constructed by means of a moduli problem which grants them with commuting actions from some $p$-adic and Galois groups. Therefore, the étale cohomology of these spaces carries representations of these groups simultaneously, and it is expected to realize a local version of Langlands correspondance. Computing this cohomology is an arduous problem in general. So far it has only been entirely described in a few special cases such as the Lubin-Tate tower or the Drinfeld space ; in particular both of them correspond to Rapoport-Zink spaces of EL type.\\
\noindent The difficulty in studying the cohomology of the Rapoport-Zink spaces is maybe reflected by the lack of precise understanding of their geometry in general. However, for some specific choices of the set of data, the resulting moduli space may display some nice geometric features, giving hopes that their cohomology could be accessible. It is the case of the unitary unramified PEL Rapoport-Zink space $\mathcal M$ of signature $(1,n-1)$, whose special fiber $\mathcal M_{\mathrm{red}}$ is described by the Bruhat-Tits stratification constructed by Vollaard and Wedhorn in a series of two papers \cite{vw1} and \cite{vw2}. This stratification $\{\mathcal M_{\Lambda}\}_{\Lambda}$ has two interesting features. On the one hand, the closed strata $\mathcal M_{\Lambda}$ are indexed by the set of vertices $\Lambda$ of the Bruhat-Tits building of a $p$-adic group of unitary similitudes $J$ defined by the PEL data. On the other hand, each individual closed stratum $\mathcal M_{\Lambda}$ is isomorphic to a Deligne-Lusztig variety. They usually show up in Deligne-Lusztig theory, whose aim is the classification of the irreducible representations of finite groups of Lie type. In particular, the cohomology of these varieties has been extensively studied in the past decades. In a series of two papers, we aim at exploiting these geometric observations in order to link the cohomology theories of Deligne-Lusztig varieties and of Rapoport-Zink spaces.\\
Our strategy in order to examine the cohomology of the Rapoport-Zink space $\mathcal M$ takes place in two steps: first we compute the cohomology groups $\mathrm{H}^{\bullet}_{c}(\mathcal M_{\Lambda},\overline{\mathbb Q_{\ell}})$ (with $\ell \not = p$) of the closed strata; second we use the combinatorics of the Bruhat-Tits building to get information on the cohomology of $\mathcal M$. More precisely, in the second stage we introduce the analytical generic fiber $\mathcal M^{\mathrm{an}}$. It is covered with the analytical tubes $U_{\Lambda}$ of the closed strata $\mathcal M_{\Lambda}$. These are open subdomains of $\mathcal M^{\mathrm{an}}$ whose cohomology coincides with the cohomology of the closed strata up to a suitable Tate twist and shift in degrees. Through the \v{C}ech spectral sequence associated to the open cover $\{U_{\Lambda}\}_{\Lambda}$, we prove the semisimplicity of the Frobenius action on $\mathrm H^{\bullet}_c(\mathcal M^{\mathrm{an}},\overline{\mathbb Q_{\ell}})$, and we determine the cuspidal supports of its irreducible subquotients as a smooth representation of the group of unitary similitudes $J$. It also turns out that these cohomology groups are not $J$-admissible in general.\\
Lastly, the $p$-adic uniformization theorem relates the Rapoport-Zink space $\mathcal M^{\mathrm{an}}$ with the basic stratum of an associated PEL unitary Shimura variety through a geometric isomorphism. It induces a Hochschild-Serre type spectral sequence on the cohomology, through which we compute the individual cohomology groups of the basic stratum in the case $n=3$ and $4$. In particular, we find out that some automorphic representations occur with a multiplicity depending on $p$ which is a completely new phenomenon.\\
In the present paper, we carry out the first step of the strategy described above, namely we compute the cohomology of the individual closed Bruhat-Tits strata by exploiting Deligne-Lusztig theory. The second step and the results stated above can be found in the sequel \cite{muller}.\\

\noindent Let $q$ be a power of the prime number $p$. Let $\mathbf G$ be a connected reductive group over an algebraic closure $\mathbb F$ of $\mathbb F_{q}$. Assume that $\mathbf G$ is equipped with an $\mathbb F_q$-structure induced by a Frobenius morphism $F: \mathbf G \rightarrow \mathbf G$. For $\mathbf P$ a parabolic subgroup of $\mathbf G$, the associated generalized parabolic Deligne-Lusztig variety is defined by 
$$X_{\mathbf P} := \{g\mathbf P \in \mathbf G/\mathbf P \,|\, g^{-1}F(g) \in \mathbf P F(\mathbf P)\}.$$
Usually, parabolic Deligne-Lusztig varieties have been studied with the additional assumption that $\mathbf P$ contains a Levi complement $\mathbf L$ such that $F(\mathbf L) = \mathbf L$. Indeed, they are used to define the Deligne-Lusztig induction and restriction functors between the categories of representations of $\mathbf L^F$ and of $\mathbf G^F$, see for instance \cite{dm}. However, the closed Bruhat-Tits strata constructed by Vollaard and Wedhorn are isomorphic to Deligne-Lusztig varieties $X_{\mathbf P}$ associated to parabolic subgroups $\mathbf P$ which do not satisfy this assumption. We call them \enquote{generalized} and to our knowledge, their cohomology has not been studied so far.\\
The closed Bruhat-Tits strata $\mathcal M_{\Lambda}$ are isomorphic to generalized Deligne-Lusztig varieties $X_I(\mathrm{id})$ associated to finite unitary groups $\mathrm{U}_{2d+1}(p)$ in an odd number of variables (see \ref{defDLvariety} for the notations). Although only the case $q = p$ is relevant in the context of Vollaard and Wedhorn's paper  \cite{vw2}, we will work in this paper with a general $q$. In loc. cit. the authors defined yet another stratification on each individual stratum. It is called the Ekedahl-Oort stratification and it gives a decomposition $X_I(\mathrm{id}) \simeq \bigsqcup_{0\leq t\leq d} X_{I_t}(w_t)$ into locally closed subvarieties. It turns out that each Ekedahl-Oort stratum $X_{I_t}(w_t)$ is isomorphic to a Deligne-Lusztig variety which is not generalized. Moreover, they are closely related to Coxeter varieties whose cohomology is known thanks to the work of Lusztig in \cite{cox}. The Ekedahl-Oort stratification on $X_I(\mathrm{id})$ induces a spectral sequence on the cohomology, through which we are able to entirely compute the individual cohomology groups in terms of representations of $\mathrm U_{2d+1}(q)$. The representations which occur are all unipotent, and these are classified by partitions of $2d+1$ or equivalently by Young diagrams, see \cite{ls} and \cite{fong}. Given a partition $\lambda = (\lambda_1 \geq \ldots \geq \lambda_r)$ of $2d+1$ with $\lambda_r > 0$, the associated irreducible unipotent representation of $\mathrm{U}_{2d+1}(q)$ is denoted by $\rho_{\lambda}$. We may now state our main result, whose proof covers the section 5 of the paper. In the statement, the prime number $\ell$ is different from $p$, the field $\mathbb F$ is an algebraic closure of $\mathbb F_{q}$ and $\mathrm{Frob}$ is the geometric Frobenius relative to $\mathbb F_{q^2}$ acting on the cohomology groups.

\noindent \begin{theo} 
The following statements hold. 
\begin{enumerate}[label=\upshape (\arabic*), topsep = 0pt]
		\item The cohomology group $\mathrm H_c^i(X_I(\mathrm{id}) \otimes \mathbb F,\overline{\mathbb Q_{\ell}})$ is zero unless $0 \leq i \leq 2d$. There is an isomorphism $\mathrm H_c^i(X_I(\mathrm{id})\otimes \mathbb F,\overline{\mathbb Q_{\ell}}) \simeq \mathrm H_c^{2d-i}(X_I(\mathrm{id}) \otimes \mathbb F,\overline{\mathbb Q_{\ell}})^{\vee}(d)$ which is equivariant for the actions of $\mathrm{Frob}$ and of $\mathrm U_{2d+1}(q)$. 
		\item The Frobenius element $\mathrm{Frob}$ acts like multiplication by $(-q)^i$ on $\mathrm H_c^i(X_I(\mathrm{id}) \otimes \mathbb F,\overline{\mathbb Q_{\ell}})$. 
		\item For $0\leq i \leq d$ we have 
		$$\mathrm H_c^{2i}(X_I(\mathrm{id}) \otimes \mathbb F,\overline{\mathbb Q_{\ell}}) = \bigoplus_{s=0}^{\min(i,d - i)} \rho_{(2d + 1 - 2s, 2s)}.$$
		\item For $0\leq i \leq d - 1$ we have 
		$$\mathrm H_c^{2i+1}(X_I(\mathrm{id}) \otimes \mathbb F,\overline{\mathbb Q_{\ell}}) = \bigoplus_{s=0}^{\min(i,d - 1 - i)} \rho_{(2d - 2s, 2s + 1)}.$$
	\end{enumerate}
\end{theo}

\noindent In particular, when the index is even all the representations in the cohomology groups contribute to the unipotent principal series, but when the index is odd the representations belong to the unipotent series determined by a minimal Levi complement of $\mathrm{U}_{2d+1}(q)$ which is not a torus.\\

\noindent Throughout the paper, we fix $q$ a power of an odd prime number $p$. If $k$ is a perfect field extension of $\mathbb F_q$, we denote by $\sigma: x \mapsto x^q$ the $q$-th power Frobenius of $\mathrm{Gal}(k/\mathbb F_q)$. We fix an algebraic closure $\mathbb F$ of $\mathbb F_q$.\\

\noindent \textbf{\textsc{Acknowledgement:}} This paper is part of a PhD thesis under the supervision of Pascal Boyer (Université Sorbonne Paris Nord) and Naoki Imai (University of Tokyo). I am grateful for their wise guidance throughout the research. I also wish to adress special thanks to Olivier Dudas (Université de Paris) who gave me precious support regarding Deligne-Lusztig theory. He taught me the subtleties of this field, and guided me through the vast literature with precise references in order to carry out the computations of the cohomology groups.

\section{The generalized Deligne-Lusztig variety $X_I(\mathrm{id})$}

\paragraph{}\label{notations}Let $\mathbf G$ be a connected reductive group over $\mathbb F$. Let $F$ be a Frobenius morphism defining an $\mathbb F_q$-structure on it. If $\mathbf H$ is an $F$-stable subgroup of $\mathbf G$, we denote by $H := \mathbf H^F \simeq \mathbf H(\mathbb F_q)$ its group of $\mathbb F_q$-rational points. We fix a pair $(\mathbf T,\mathbf B)$ consisting of a maximal torus $\mathbf T$ contained in a Borel subgroup $\mathbf B$, both of them being $F$-stable. Such a pair always exists up to $G=\mathbf G^F$-conjugation. We obtain a Coxeter system $(\mathbf W,\mathbf S)$ on which $F$ acts, where $\mathbf W = \mathbf W(\mathbf T)$ is the Weyl group attached to $\mathbf T$ and $\mathbf S$ is the set of simple reflexions. It can be identified with the Weyl group of $\mathbf G$ as defined in \cite{dl}. Let $\ell$ denote the length function on $\mathbf W$ relative to $\mathbf S$. For $I\subset \mathbf S$, we write $\mathbf P_I, \mathbf U_I, \mathbf L_I$ respectively for the standard parabolic subgroup of type $I$, for its unipotent radical and for its unique Levi complement containing $\mathbf T$. We also write $\mathbf W_I$ for the parabolic subgroup of $\mathbf W$ generated by the simple reflexions in $I$. Recall that an element $w\in \mathbf W$ is said to be $I$-reduced (resp. reduced-$I$) if for every $v\in \mathbf W_I$, we have $\ell(vw) = \ell(v) + \ell(w)$ (resp. $\ell(wv) = \ell(w) + \ell(v)$). The set of $I$-reduced (resp. reduced-$I$) elements is denoted by ${}^I\mathbf W$ (resp. $\mathbf W^{I}$). If $I,I'\subset \mathbf S$, an element is said to be $I$-reduced-$I'$ if it belongs to ${}^I\mathbf W^{I'} := {}^I\mathbf W \cap \mathbf W^{I'}$.

\paragraph{}\label{defDLvariety}We recall the definition of Deligne-Lusztig varieties from \cite{bonnafe}. If $\mathbf P$ is any parabolic subgroup of $\mathbf G$, the associated generalized parabolic Deligne-Lusztig variety is
$$X_{\mathbf P} := \{g\mathbf P\in \mathbf G/\mathbf P\,|\,g^{-1}F(g)\in \mathbf P F(\mathbf P)\}.$$
When these varieties were first introduced in \cite{dl} only the case of Borel subgroups was considered, hence the adjective \enquote{parabolic}. Moreover, parabolic Deligne-Lusztig varieties have mostly been studied with the additional assumption that $\mathbf P$ contains an $F$-stable Levi complement, see for instance \cite{dm}. This is not required by the definition above, hence the adjective \enquote{generalized}.\\
Using the Coxeter system as above, one may give an equivalent description of these varieties. For $I,I'\subset \mathbf S$ the generalized Bruhat decomposition is an isomorphism
$$\mathbf P_I\backslash \mathbf G / \mathbf P_{I'} = \bigsqcup_{w\in{}^I\mathbf W^{I'}} \mathbf P_I\backslash \mathbf P_Iw\mathbf P_{I'}/\mathbf P_{I'} \simeq \mathbf W_{I}\backslash \mathbf W / \mathbf W_{I'}.$$
For $w\in {}^I\mathbf W^{F(I)}$, the generalized parabolic Deligne-Lusztig varieties is defined by
$$X_I(w) = \{g\mathbf P_I\in \mathbf G/\mathbf P_I \,|\, g^{-1}F(g)\in \mathbf P_IwF(\mathbf P_I)\}.$$
The families of varieties $X_{\mathbf P}$ and $X_I(w)$ are the same and \cite{bonnafe} explains how to go from one description to the other. The case $I = \emptyset$ corresponds to usual Deligne-Lusztig varieties in $\mathbf G/\mathbf B$. Moreover, the additional assumption regarding the existence of a rational Levi complement translates into the equation 
\begin{equation}\label{CompatibilityCondition}
w^{-1}Iw = F(I), \tag{$*$}
\end{equation}
which is a compatibility condition between the parameters $w$ and $I$. The variety $X_I(w)$ is defined over $\mathbb F_{q^{\iota}}$, where $\iota$ is the least integer such that $F^{\iota}(I) = I$ and $F^{\iota}(w) = w$. 

\paragraph{}In this paragraph, we compute the dimension of a generalized Deligne-Lusztig variety $X_I(w)$. For any $w\in \mathbf W$, let $\ell(w)$ denote the length of $w$ with respect to $\mathbf S$.

\begin{prop}
For $I\subset \mathbf S$ and $w\in {}^I\mathbf W^{F(I)}$, we have 
$$\dim X_I(w) = \ell(w) + \dim \mathbf G/\mathbf P_{I\cap wF(I)w^{-1}} - \dim \mathbf G/\mathbf P_{I}.$$
\end{prop}

\noindent Let us introduce a few more notations. If $I,I' \subset \mathbf S$, the generalized Bruhat decomposition implies that the $\mathbf G$-orbits for the diagonal action on $\mathbf G/\mathbf P_I \times \mathbf G/\mathbf P_{I'}$ are given by 
$$\mathcal O_{I,I'}(w) := \{(g\mathbf P_I,h\mathbf P_{I'}) \,|\, g^{-1}h \in \mathbf P_Iw\mathbf P_{I'}\}$$
for $w\in {}^I\mathbf W^{I'}$. The Deligne-Lusztig variety $X_I(w)$ can be seen as the intersection of $\mathcal O_{I,F(I)}(w)$ with the graph of the Frobenius $F:\mathbf G/\mathbf P_I \rightarrow \mathbf G/\mathbf P_{F(I)}$. This intersection is transverse, see \cite{dl} 9.11 (in loc. cit. the proof deals with the case $I = \emptyset$, but it generalizes to any $I$). Thus, the proposition follows from the following lemma and the fact that $\dim \mathbf P_I = \dim \mathbf P_{F(I)}$. 

\begin{lem}
For $I,I' \subset \mathbf S$ and $w\in {}^I\mathbf W^{I'}$, we have 
$$\dim \mathcal O_{I,I'}(w) = \ell(w) + \dim \mathbf G/\mathbf P_{I\cap wI'w^{-1}}.$$
\end{lem}

\begin{proof}
Recall that for $I \subset \mathbf S$, the standard parabolic subgroup of type $I$ decomposes as a union of Bruhat cells $\mathbf P_I = \mathbf B\mathbf W_I\mathbf B$, and any Bruhat cell $\mathbf Bw\mathbf B$ has dimension $\dim \mathbf B + \ell(w)$. Therefore $\dim \mathbf P_I = \dim \mathbf B + \ell(I)$ where $\ell(I)$ denotes the maximal length of elements of $\mathbf W_I$.\\
Let $I,I'$ and $w$ be as in the lemma. Consider the first projection $\mathcal O_{I,I'}(w) \rightarrow \mathbf G/\mathbf P_I$ which is a surjective morphism with fibers isomorphic to $\mathbf P_Iw\mathbf P_{I'}/\mathbf P_{I'}$. It is flat since $\mathbf G \rightarrow \mathbf G/\mathbf P_I$ is faithfully flat, and the pullback $\mathcal O_{I,I'}(w) \times_{\mathbf G/\mathbf P_I} \mathbf G$ is isomorphic to $\mathbf G \times \mathbf P_Iw\mathbf P_{I'}/\mathbf P_{I'}$. We have 
$$\mathbf P_Iw\mathbf P_{I'} = \mathbf B\mathbf W_I\mathbf Bw\mathbf B\mathbf W_{I'}\mathbf B = \mathbf B \mathbf W_Iw\mathbf W_{I'}\mathbf B,$$
therefore the dimension of a fiber is given by 
$$\dim \mathbf P_Iw\mathbf P_{I'}/\mathbf P_{I'} = \dim \mathbf P_Iw\mathbf P_{I'} - \dim \mathbf P_{I'} = \max_{v\in \mathbf W_Iw\mathbf W_{I'}} \ell(v) - \ell(I').$$
Since $w$ is $I$-reduced-$I'$, according to \cite{dmbook} Lemma 3.2.2, any element $v \in W_Iw\mathbf W_{I'}$ can uniquely be written as $v = xwy$ such that $x\in \mathbf W_I, y \in \mathbf W_{I'}$ and $xw$ is reduced-$I'$. In particular $\ell(v) = \ell(x) + \ell(w) + \ell(y)$. It follows that 
$$\max_{v\in \mathbf W_Iw\mathbf W_{I'}} \ell(v) = \ell(w) + \max_{x \in \mathbf W_I \cap \mathbf W^{I'}w^{-1}} \ell(x) + \ell(I').$$
We prove that $\mathbf W_I \cap \mathbf W^{I'}w^{-1} = \mathbf W_I \cap \mathbf W^{I\cap wI'w^{-1}}$.\\

\noindent Let $x \in \mathbf W_I \cap \mathbf W^{I'}w^{-1}$, we show that $x$ is reduced-$I\cap wI'w^{-1}$. Let $s \in I\cap wI'w^{-1}$, so that we can write $s = wtw^{-1}$ for some $t\in I'$. Then $xsw = xwt$. Since $xs \in \mathbf W_I$ and $w$ is $I$-reduced, the left hand side has length $\ell(xs) + \ell(w)$. On the other hand, since $t\in I'$ and $xw$ is reduced-$I'$, the right hand side has length $\ell(xw) + 1 = \ell(x) + \ell(w) + 1$. Therefore $\ell(xs) = \ell(x) + 1$ as expected. \\
For the other inclusion, let $y \in \mathbf W_I \cap \mathbf W^{I\cap wI'w^{-1}}$. We show that $yw$ is reduced-$I'$. Towards a contradiction, assume that $\ell(ywt) < \ell(yw)$ for some $t\in I'$. Let $y = s_1\ldots s_r$ and $w = u_1\ldots u_{r'}$ be reduced expressions respectively of $y$ and of $w$, with the $s_i$ in $I$ and the $u_j$ in $\mathbf S$. Since $w$ is $I$-reduced, the concatenation of both reduced expressions give a reduced expression of $yw$. By the exchange condition (see \cite{dmbook} 2.1.2), we have 
$$ywt = s_1 \ldots \widehat{s_i}\ldots s_r w \text { or } yu_1\ldots \widehat{u_j}\ldots u_{r'}$$
for some $1\leq i \leq r$ or $1\leq j \leq r'$, where $\widehat{\cdot}$ denotes the product with one omitted term. The second case is impossible, since after simplifying $y$ it would contradict the fact that $w$ is reduced-$I'$. \\
Let us write $s :=  y^{-1}s_1 \ldots \widehat{s_i}\ldots s_r \in \mathbf W_I$, so that we have 
$$wt =  sw.$$
The left hand side has length $\ell(w) + 1$, and the right hand side has length $\ell(s) + \ell(w)$. It follows that $s\in I$ has length $1$. Therefore $s = wtw^{-1} \in I\cap wI'w^{-1}$. Eventually, we have $\ell(ys) = \ell(y) + 1$ since $y$ is reduced-$(I\cap wI'w^{-1})$. This is absurd, because $ys = s_1 \ldots \widehat{s_i}\ldots s_r$ has length $r-1 = \ell(y)-1$. \\

\noindent To conclude the proof, we recall the following general fact. If $(\mathbf W,\mathbf S)$ is a Coxeter system and $K \subset \mathbf S$, then the product map $\mathbf W^K \times \mathbf W_K \xrightarrow{\sim} \mathbf W$ mapping $(w^K,w_K)$ to $w^Kw_K$ is a bijection. In particular we have 
$$\max_{w\in \mathbf W} \ell(w) = \max_{w^K \in \mathbf W^K} \ell(w^K) + \max_{w_K \in \mathbf W_K} \ell(w_K).$$
We apply this to the Coxeter system $(\mathbf W_I,I)$ and $K = I\cap wI'w^{-1}$. It follows that 
$$\max_{x \in \mathbf W_I \cap \mathbf W^{I'}w^{-1}} \ell(x) = \max_{x \in \mathbf W_I \cap \mathbf W^{I\cap wI'w^{-1}}} \ell(x) =  \ell(I) - \ell(I\cap wI'w^{-1}).$$
Putting things together, have proved that 
\begin{align*}
\dim \mathcal O_{I,I'}(w) & = \dim \mathbf G/\mathbf P_I + \dim \mathbf P_Iw\mathbf P_{I'}/\mathbf P_{I'} \\
& = \dim \mathbf G - \dim \mathbf B - \ell(I) + \max_{v\in \mathbf W_Iw\mathbf W_{I'}} \ell(v) - \ell(I') \\
& = \dim \mathbf G - \dim \mathbf B - \ell(I) + \ell(w) + \max_{x \in \mathbf W_I \cap \mathbf W^{I'}w^{-1}} \ell(x) \\
& = \dim \mathbf G - \dim \mathbf B - \ell(I\cap wI'w^{-1}) + \ell(w)\\
& = \dim \mathbf G/\mathbf P_{I\cap wI'w^{-1}} + \ell(w).
\end{align*}
\end{proof}

\begin{rk}
In \cite{vw2} 4.4, the formula given by the authors for the dimension of $\mathcal O_{I,I'}(w)$, and as a consequence for the Deligne-Lusztig variety $X_I(w)$ as well, contained a mistake.
\end{rk}

\paragraph{}\label{finitehermitianspace} Let $d$ be a nonnegative integer and let $V$ be a $(2d+1)$-dimensional $\mathbb F_{q^2}$-vector space. Let $(\cdot,\cdot):V\times V \to \mathbb F_{q^2}$ be a non-degenerate hermitian form on $V$. This hermitian structure on $V$ is unique up to isomorphism. In particular, we may once and for all a basis $\mathcal B$ of $V$ in which $(\cdot,\cdot)$ is described by the square matrix $\dot{w}_0$ of size $2d +1$, having $1$ on the anti-diagonal and $0$ everywhere else. If $k$ is a perfect field extension of $\mathbb F_{q^2}$, we may extend the pairing to $V_k := V\otimes_{\mathbb F_{q^2}} k$ by setting 
$$(v\otimes x,w\otimes y):=xy^{\sigma}(v,w)\in k$$
for all $v,w\in V$ and $x,y\in k$. If $U$ is a subspace of $V_k$ we denote by $U^{\perp}$ its orthogonal, that is the subspace of all vectors $x\in V_k$ such that $(x,U)=0$.\\
Let $J$ denote the finite group of Lie type $\mathrm{U}(V,(\cdot,\cdot))$. It is defined as the group of $F$-fixed points of $\mathbf J := \mathrm{GL}(V)_{\mathbb F}$ with $F$ a non-split Frobenius morphism. Using the basis $\mathcal B$, the group $\mathbf J$ is identified with $\mathrm{GL}_{2d+1}$ with $\mathbb F_q$-structure induced by the Frobenius morphism $F(M):=\dot{w}_0(M^{(q)})^{-t}\dot{w}_0$. Here, $M^{(q)}$ denotes the matrix $M$ having all coefficients raised to the power $q$. We may then identify $J$ with the usual finite unitary group $\mathrm{U}_{2d+1}(q)$.\\
The pair $(\mathbf T,\mathbf B)$ consisting of the maximal torus of diagonal matrices and the Borel subgroup of upper-triangular matrices is $F$-stable. The Weyl system of $(\mathbf T,\mathbf B)$ may be identified with $(\mathfrak S_{2d+1},\mathbf S)$ in the usual manner, where $\mathbf S$ is the set of simple transpositions $s_i:=(i\quad i+1)$ for $1\leq i \leq 2d$. Under this identification, the Frobenius acts on $\mathbf W$ as the conjugation by the element $w_0$, characterized for having the maximal length. It satisfies $w_0(i) = 2d + 2 -i$, and a natural representative of $w_0$ in the normalizer of $\mathbf T$ is no other than $\dot{w}_0$. Since $w_0$ has order $2$, the action of the Frobenius on $\mathbf W$ is involutive. It also preserves the simple reflexions with the formula $F(s_i) = s_{2d+1-i}$.

\paragraph{}\label{isomorphismDLvariety} We define the following subset of $\mathbf S$ 
$$I:=\{s_1,\ldots ,s_{d} ,s_{d+2},\ldots ,s_{2d}\} = \mathbf S\setminus \{s_{d+1}\}.$$
We have $F(I) = \mathbf S \setminus \{s_{d}\} \not = I$. We consider the generalized Deligne-Lusztig variety $X_{I}(\mathrm{id})$. It corresponds to the variety denoted $Y_{\Lambda}$ in \cite{vw2} 4.5. It has dimension $d$ and it does not satisfy the compatibility condition \eqref{CompatibilityCondition}.

\begin{prop}[\cite{vw2} 4.4]
The variety $X_{I}(\mathrm{id})$ is defined over $\mathbb F_{q^2}$ and it is projective, smooth, geometrically irreducible of dimension $d$.
\end{prop}

\noindent Although the proposition in loc. cit. is only stated in the case $q = p$, the arguments carry over to general $q$. The geometric irreducibility is a consequence of the criterion proved in \cite{bonnafe}.

\begin{rk}
Even though the dimension formula for generalized Deligne-Lusztig varieties in \cite{vw2} is wrong, it does give the correct result in the case of $X_I(\mathrm{id})$. It is because for $w = \mathrm{id}$, we have $I\cap wF(I)w^{-1} = I \cap F(I)$. Therefore, that mistake does not change anything regarding the validity of the authors' work.\\
For example, we may consider the Deligne-Lusztig variety $X_I(s_2s_1)$ for $\mathrm{U}_{3}(\mathbb F_q)$ with $I = \{s_1\}$. It is classical so that $\dim X_I(s_2s_1) = \ell(s_2s_1) = 2$. However, we have $\mathbf P_{I\cap F(I)} = \mathbf B$ and $\dim \mathbf G/\mathbf B = 3$ whereas $\dim \mathbf G/\mathbf P_I = 2$, so that the formula of loc. cit. says that $X_I(s_2s_1)$ would be of dimension $2+3-2 = 3$. 
\end{rk}

\paragraph{}Rational points of Deligne-Lusztig varieties associated to a unitary group $\mathrm{U}$ over $\mathbb F_q$ can be described in terms of vectorial flags, in a certain relative position with respect to their image by the Frobenius. Let $k$ be a perfect field extension of $\mathbb F_{q^2}$. According to \cite{vw1} 2.12, the Frobenius acts on a flag $\mathcal F$ in $V_k$ by sending it to its orthogonal flag $\mathcal F^{\perp}$. Explicitely, we have 
$$
\begin{tikzcd}[column sep=scriptsize]
\mathcal F\;  &[-25pt] : \quad \{0\}  &[-25pt] \subset &[-25pt] \mathcal F_1 &[-25pt] \subset &[-25pt] \ldots &[-25pt] \subset &[-25pt] \mathcal F_r &[-25pt] \subset &[-25pt] V_k,\\[-15pt]
\mathcal F^{\perp}\; &[-25pt] : \quad \{0\}  &[-25pt] \subset &[-25pt] \mathcal F^{\perp}_r &[-25pt] \subset &[-25pt] \ldots &[-25pt] \subset &[-25pt] \mathcal F^{\perp}_1 &[-25pt] \subset &[-25pt] V_k.
\end{tikzcd}
$$
Here, given our choice of $I$, a $k$-rational point of $X_I(\mathrm{id})$ corresponds to a flag of the type $$\mathcal F: \{0\} \subset U \subset V_k$$ with $U$ having dimension $d+1$, and which is of relative position $\mathrm{id}$ with respect to $\mathcal F^{\perp}$. This precisely means that $U$ must contain $U^{\perp}$.

\begin{prop}
The $k$-rational points of $X_I(\mathrm{id})$ are given by 
$$X_I(\mathrm{id})(k) \simeq \{U\subset V_k \,|\, \dim U = d+1 \text{ and }U^{\perp}\subset U\}.$$
\end{prop}

\paragraph{}\label{EOstratification} In \cite{vw2} 5.3, the authors defined the \textbf{Ekedahl-Oort stratification} on the Deligne-Lusztig variety $X_I(\mathrm{id})$. By loc. cit. Corollary 5.12, it turns out that each stratum is itself isomorphic to a parabolic Deligne-Lusztig variety which is not generalized. They are defined as follows.\\
For $0 \leq t \leq d$, we define the subset 
$$I_{t}:=\{s_1,\ldots ,s_{d - t -1},s_{d + t + 2},\ldots ,s_{2d}\} \subset \mathbf S.$$
The subset $I_{t}$ consists of all $2d$ simple reflexions in $\mathbf S$, except that we removed the $2t+2$ ones in the middle. Thus, it has cardinality $2(d - t - 1)$. In particular, it is empty for $t = d$ or $d-1$. We also define the cycle $w_{t} := (d + t + 1 \quad d + t \, \ldots \, d +1)$. Its decomposition into simple reflexions is $w_{t} = s_{d+1}\ldots s_{d + t}$. When $t = 0$, it is the identity. We note that even though $I_{d} = I_{d-1} = \emptyset$, we still have $w_{d} \not = w_{d-1}$.\\
One may check that $F(I_{t}) = I_{t}$ and that $w_{t}$ belongs to $^{I_{t}}\mathbf W^{I_{t}}$. Moreover, the compatibility condition \eqref{CompatibilityCondition} is satisfied for the pair $(I_{t},w_{t})$. Indeed, the reduced decomposition for $w_{t}$ does not use any simple reflexion that is adjacent to those in $I_{t}$.

\noindent \begin{prop}[\cite{vw2} 3.3 and 5.3]
The Deligne-Lusztig variety $X_{I_t}(w_t)$ is defined over $\mathbb F_{q^2}$ and has dimension $t$. There is a natural immersion $X_{I_t}(w_t) \hookrightarrow X_I(\mathrm{id})$ inducing a stratification 
$$X_I(\mathrm{id}) = \bigsqcup_{0 \leq t \leq d} X_{I_t}(w_t).$$
The closure of the stratum $X_{I_t}(w_t)$ is the union of all the strata $X_{I_s}(w_s)$ for $s \leq t$.
\end{prop}

\paragraph{}Following the proof of Theorem 2.15 of \cite{vw1}, we can describe the stratification at the level of rational points. Let $k$ be a perfect field extension of $\mathbb F_{q^2}$. Because of the choice of $I_{t}$, a $k$-point of $X_{I_{t}}(w_{t})$ is a flag 
$$
\begin{tikzcd}[column sep=scriptsize]
\mathcal F\; : \quad \{0\}  \subset \mathcal F_{-t-1} \subset \ldots \subset \mathcal F_{-1} \subset \mathcal F_1 \subset \ldots \subset \mathcal F_{t+1} \subset V_k
\end{tikzcd}
$$
with $\dim(\mathcal F_{-i}) = d+1-i$ and $\dim(\mathcal F_i) = d+i$ for $1\leq i \leq t +1$, and which is in relative position $w_t$ with respect to $\mathcal F^{\perp}$. It means that we have a diagram of the following type. 

$$
\begin{tikzcd}[column sep=scriptsize]
    \mathcal F : &[-25pt] \mathcal F_{-t-1} \arrow[equal]{d} &[-25pt] \subset \ldots \subset &[-25pt] \mathcal F_{-1} \arrow[equal]{d} &[-25pt] \subset &[-25pt] \mathcal F_1 \arrow[d,equal,"/"{anchor=center,sloped}] \arrow[hook]{drr} &[-25pt] \subset &[-25pt] \mathcal F_2 \arrow[d,equal,"/"{anchor=center,sloped}] \arrow[hook]{dr} &[-25pt] \subset  \ldots \arrow[hook]{dr} \subset &[-25pt] \mathcal F_{t} \arrow[d,equal,"/"{anchor=center,sloped}] \arrow[hook]{drr} &[-25pt] \subset &[-25pt] \mathcal F_{t+1} \arrow[equal]{d} \\[10pt]
    \mathcal F^{\perp} : &[-25pt] \mathcal F_{t+1}^{\perp} &[-25pt] \subset \ldots \subset &[-25pt] \mathcal F_{1}^{\perp} &[-25pt] \subset &[-25pt] \tau(\mathcal F_1) &[-25pt] \subset &[-25pt] \tau(\mathcal F_2) &[-25pt] \subset  \ldots \subset &[-25pt] \tau(\mathcal F_{t}) &[-25pt] \subset &[-25pt] \tau(\mathcal F_{t+1})
\end{tikzcd}
$$

\noindent Here, $\tau := \sigma^2\cdot\mathrm{id}$ is an $\mathbb F_{q^2}$-linear automorphism of $V_k$, and it satisfies $\tau(U) = (U^{\perp})^{\perp}$ for every subspace $U \subset (V_{\Lambda})_k$. This diagram implies that $\tau(\mathcal F_i) = \mathcal F_{i-1} + \tau(\mathcal F_{i-1})$ for all $2\leq i \leq t+1$. This rewrites as $\mathcal F_{i} =  \mathcal F_{i-1} + \tau^{-1}(\mathcal F_{i-1})$. We deduce that 
$$\mathcal F_i = \sum_{l=0}^{i-1} \tau^{-l}(\mathcal F_1)$$
for all $1\leq i \leq t+1$. Thus, the whole flag is determined by the subspace $\mathcal F_1$, which has dimension $d+1$ and contains its orthogonal. The immersion $X_{I_t}(w_t) \hookrightarrow X_I(\mathrm{id})$ maps the flag $\mathcal F$ to $\mathcal F_1$.\\
Conversely, a $k$-point of $X_I(\mathrm{id})$ is given by a subspace $U \subset V_k$ of dimension $d+1$ containing its orthogonal. For $i \geq 1$ we define
$$\mathcal F_i := \sum_{l=0}^{i-1} \tau^{-1}(U) \subset V_k.$$
Then $(\mathcal F_i)_{i\geq 1}$ is a nondecreasing sequence of subspaces of $V_k$. Let $t$ be the smallest integer such that $\mathcal F_{t+1} = \mathcal F_{t+2}$. It follows that $0 \leq t \leq d$ and that $t$ is also the smallest integer such that $\mathcal F_{t+1} = \tau(\mathcal F_{t+1})$. Moreover the orthogonal $U^{\perp}$ has dimension $d$ and we have $U^{\perp} \subset U$, so that $U^{\perp} \subset (U^{\perp})^{\perp} = \tau(U)$. In particular, if $t>0$ then $U \cap \tau(U) = U^{\perp}$. Thus, we have $\dim(\mathcal F_2) = d + 2$. Similarly, we have $\dim(\mathcal F_i) = d+i$ for all $1 \leq i \leq t+1$. By setting $\mathcal F_{-i} := \mathcal F_i^{\perp}$, we obtain a flag $\mathcal F$ that is the $k$-rational point of $X_{I_t}(w_t)$ associated to $U$.

\paragraph{}\label{isomorphismEOstratum} The Deligne-Lusztig varieties $X_{I_{t}}(w_{t})$ are related to Coxeter varieties for smaller unitary groups as we now explain. We define 
$$K_{t} := \{s_1,\ldots s_{d-t-1}, s_{d-t+1},\ldots , s_{d+t}, s_{d+t+2}, \ldots , s_{2d}\} = \mathbf S \setminus \{s_{d-t}, s_{d+t+1}\}.$$
The set $K_{t}$ is obtained from $I_{t}$ by adding the $2t$ simple reflexions in the middle. It has cardinality $2d - 2$ and satisfies $F(K_{t}) = K_{t}$. We have $I_{t} \subset K_{t}$ with equality if and only if $t=0$.

\noindent \begin{prop}
There is a $\mathrm U_{2d+1}(q)$-equivariant isomorphism
$$X_{I_{t}}(w_{t}) \simeq \mathrm U_{2d+1}(q)/U_{K_{t}} \times_{L_{K_{t}}} X_{I_{t}}^{\mathbf L_{K_{t}}}(w_{t}),$$
where $X_{I_{t}}^{\mathbf L_{K_{t}}}(w_{t})$ is a Deligne-Lusztig variety for $\mathbf L_{K_{t}}$. The zero-dimensional variety $\mathrm U_{2d+1}(q)/U_{K_{t}}$ has a left action of $\mathrm U_{2d+1}(q)$ and a right action of $L_{K_{t}}$.
\end{prop}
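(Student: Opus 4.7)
The plan is to exhibit $X_{I_{\theta'}}(w_{\theta'})$ as a fibration over a discrete base via the natural projection $\pi: \mathbf G/\mathbf P_{I_{\theta'}} \to \mathbf G/\mathbf P_{J_{\theta'}}$ induced by the inclusion $I_{\theta'} \subset J_{\theta'}$. The crucial observation is that, since $w_{\theta'} \in \mathbf W_{J_{\theta'}}$ and $F(\mathbf P_{J_{\theta'}}) = \mathbf P_{J_{\theta'}}$, we have the containment $\mathbf P_{I_{\theta'}} w_{\theta'} F(\mathbf P_{I_{\theta'}}) \subset \mathbf P_{J_{\theta'}}$. Consequently, for any $g\mathbf P_{I_{\theta'}} \in X_{I_{\theta'}}(w_{\theta'})$ one has $g^{-1}F(g) \in \mathbf P_{J_{\theta'}}$, so $g \mathbf P_{J_{\theta'}}$ is an $F$-stable parabolic of type $J_{\theta'}$. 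By Lang--Steinberg applied to $\mathbf P_{J_{\theta'}}$, every such parabolic is $G$-conjugate to $\mathbf P_{J_{\theta'}}$ itself, so $\pi$ factors through the discrete orbit $G \cdot \mathbf P_{J_{\theta'}} \simeq G/P_{J_{\theta'}}$.

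The next step is to analyze the fiber of $\pi$ over the base point $\mathbf P_{J_{\theta'}}$. Points of this fiber correspond to parabolics of type $I_{\theta'}$ contained in $\mathbf P_{J_{\theta'}}$, and via the Levi decomposition $\mathbf P_{J_{\theta'}} = \mathbf U_{J_{\theta'}} \rtimes \mathbf L_{J_{\theta'}}$ they identify canonically with $\mathbf L_{J_{\theta'}} / (\mathbf L_{J_{\theta'}} \cap \mathbf P_{I_{\theta'}})$, the flag variety of parabolics of type $I_{\theta'}$ in the Levi $\mathbf L_{J_{\theta'}}$. Writing $g = ul$ with $u \in \mathbf U_{J_{\theta'}}$ and $l \in \mathbf L_{J_{\theta'}}$, I would project the condition $g^{-1}F(g) \in \mathbf P_{I_{\theta'}} w_{\theta'} F(\mathbf P_{I_{\theta'}})$ to the Levi quotient $\mathbf P_{J_{\theta'}} \twoheadrightarrow \mathbf L_{J_{\theta'}}$; the result is precisely $l^{-1}F(l) \in (\mathbf L_{J_{\theta'}} \cap \mathbf P_{I_{\theta'}}) \, w_{\theta'} \, F(\mathbf L_{J_{\theta'}} \cap \mathbf P_{I_{\theta'}})$, i.e. the defining condition of $X_{I_{\theta'}}^{\mathbf L_{J_{\theta'}}}(w_{\theta'})$. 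Lang--Steinberg applied to the connected unipotent group $\mathbf U_{J_{\theta'}}$ ensures that the factor $u$ contributes nothing new: the unipotent part can be adjusted within $\mathbf U_{J_{\theta'}}$ without changing the coset $g\mathbf P_{I_{\theta'}}$, since $\mathbf U_{J_{\theta'}} \subset \mathbf P_{I_{\theta'}}$.

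Combining these observations yields a $G$-equivariant isomorphism $X_{I_{\theta'}}(w_{\theta'}) \simeq G \times^{P_{J_{\theta'}}} X_{I_{\theta'}}^{\mathbf L_{J_{\theta'}}}(w_{\theta'})$, where $P_{J_{\theta'}}$ acts on the Levi Deligne--Lusztig variety through the quotient $P_{J_{\theta'}} \twoheadrightarrow L_{J_{\theta'}}$. Because $U_{J_{\theta'}}$ acts trivially on that variety, this twisted product is the same as the balanced product $G/U_{J_{\theta'}} \times_{L_{J_{\theta'}}} X_{I_{\theta'}}^{\mathbf L_{J_{\theta'}}}(w_{\theta'})$ in the statement, with the left $G$-action and right $L_{J_{\theta'}}$-action on $G/U_{J_{\theta'}}$ as described.

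The most delicate step is the fiber analysis: one has to check that the Bruhat-type double coset $\mathbf P_{I_{\theta'}} w_{\theta'} F(\mathbf P_{I_{\theta'}})$ projects cleanly to the double coset inside the Levi, with no spurious contribution surviving from $\mathbf U_{J_{\theta'}}$. This is where the compatibility conditions $F(I_{\theta'}) = I_{\theta'}$, $F(J_{\theta'}) = J_{\theta'}$, $w_{\theta'} \in \mathbf W_{J_{\theta'}}$, and $w_{\theta'}^{-1} I_{\theta'} w_{\theta'} = I_{\theta'}$ are essential: they guarantee that each factor in $\mathbf P_{I_{\theta'}} w_{\theta'} F(\mathbf P_{I_{\theta'}})$ respects the Levi decomposition of $\mathbf P_{J_{\theta'}}$.
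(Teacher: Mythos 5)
Your argument is correct, but it takes a different route from the paper: the paper disposes of this proposition in one line by invoking Proposition 7.19 of \cite{dm} (the geometric identity behind transitivity of Deligne--Lusztig induction), merely checking its hypotheses, namely $I_{\theta'} \subset J_{\theta'}$, $F(J_{\theta'}) = J_{\theta'}$ and $w_{\theta'} \in \mathbf W_{J_{\theta'}}$. You instead reprove this special case directly: the containment $\mathbf P_{I_{\theta'}} \dot w_{\theta'} F(\mathbf P_{I_{\theta'}}) \subset \mathbf P_{J_{\theta'}}$ forces ${}^{g}\mathbf P_{J_{\theta'}}$ to be $F$-stable, Lang--Steinberg for the connected group $\mathbf P_{J_{\theta'}}$ makes the image of $X_{I_{\theta'}}(w_{\theta'}) \to \mathbf G/\mathbf P_{J_{\theta'}}$ the single finite orbit $G/P_{J_{\theta'}}$, and the fiber over the base point is identified with $X_{I_{\theta'}}^{\mathbf L_{J_{\theta'}}}(w_{\theta'})$ via the double-coset computation $\mathbf P_{I_{\theta'}} \dot w_{\theta'} F(\mathbf P_{I_{\theta'}}) \cap \mathbf L_{J_{\theta'}} = (\mathbf L_{J_{\theta'}} \cap \mathbf P_{I_{\theta'}}) \dot w_{\theta'} F(\mathbf L_{J_{\theta'}} \cap \mathbf P_{I_{\theta'}})$, which is indeed the delicate step and does go through. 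Two small corrections to your write-up: in the fiber analysis what actually does the work is not Lang--Steinberg for $\mathbf U_{J_{\theta'}}$ but the normality of $\mathbf U_{J_{\theta'}}$ in $\mathbf P_{J_{\theta'}}$ together with $\mathbf U_{J_{\theta'}} \subset \mathbf P_{I_{\theta'}}$ (this is what shows $ul\,\mathbf P_{I_{\theta'}} = l\,\mathbf P_{I_{\theta'}}$ and that the condition on $g^{-1}F(g)$ only depends on $l^{-1}F(l)$ modulo $\mathbf U_{J_{\theta'}}$); and the condition $w_{\theta'}^{-1} I_{\theta'} w_{\theta'} = I_{\theta'}$ is not needed anywhere for this geometric isomorphism --- it matters later, when results of \cite{dm} on the cohomological side are applied, not here. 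The trade-off is clear: the paper's citation is shorter and leans on established machinery, while your argument is self-contained and makes transparent both why the base is discrete and why $U_{J_{\theta'}}$ acts trivially on the fiber, so that $G \times^{P_{J_{\theta'}}} X_{I_{\theta'}}^{\mathbf L_{J_{\theta'}}}(w_{\theta'})$ coincides with the balanced product in the statement.
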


\begin{proof}
This is an application of \cite{dm} Proposition 7.19 which is the geometric identity behind the transitivity of the Deligne-Lusztig functors. It applies to the varieties $X_{I_t}(w_t)$ because they satisfy the compatibility condition \eqref{CompatibilityCondition}, and satisfies the following conditions: $K_{t}$ contains $I_{t}$, it is stable by the Frobenius and $w_{t}$ belongs to the parabolic subgroup $\mathbf W_{K_{t}} \simeq \mathfrak S_{d-t}\times \mathfrak S_{2t+1} \times \mathfrak S_{d-t} \subset \mathfrak S_{2d +1}$.
\end{proof}

\paragraph{}\label{decompositionDLvarieties} The Levi complement $\mathbf L_{K_{t}}$ is isomorphic to the product $\mathrm{GL}_{d-t} \times \mathrm{GL}_{2t +1} \times \mathrm{GL}_{d - t}$ as a reductive group over $\mathbb F$. Given a matrix $M = \mathrm{diag}(A,C,B) \in \mathbf L_{K_{t}}$, we have $F(M) = \mathrm{diag}(F(B),F(C),F(A))$, where we still denote by $F$ the Frobenius morphism for smaller linear groups. Writing $\mathbf H$ for the product of the two $\mathrm{GL}_{d-t}$ factors, we have $\mathbf L_{K_{t}} \simeq \mathbf H \times \mathrm{GL}_{2t +1}$ and both factors inherit an $\mathbb F_q$-structure by means of $F$. We have $L_{K_{t}} \simeq \mathrm{GL}_{d-t}(q^2) \times \mathrm U_{2t +1}(q)$, the first factor corresponding to $H$. \\
The Weyl group of $\mathbf L_{K_{t}}$ is isomorphic to $\mathbf W_{\mathbf H}\times \mathfrak S_{2t +1}$ where $\mathbf W_{\mathbf H} \simeq \mathfrak S_{d-t}\times \mathfrak S_{d-t}$ is the Weyl group of $\mathbf H$. Via this decomposition, the permutation $w_{t}$ corresponds to $\mathrm{id}\times \widetilde{w_{t}}$, where $\widetilde{w_{t}}$ is the restriction of $w_{t}$ to $\{d - t + 1, \ldots , d + t + 1\}$. Similarly, the set of simple reflexions $\mathbf S$ decomposes as $\mathbf S_{\mathbf H}\sqcup \widetilde{\mathbf S}$, the second term corresponding to the simple reflexions in $\mathfrak S_{2t+1}$. Then, we have $I_{t} = \mathbf S_{\mathbf H} \sqcup \emptyset$.\\
The Deligne-Lusztig variety for $\mathbf L_{K_{t}}$ decompose accordingly as the following product 
$$X_{I_{t}}^{\mathbf L_{K_{t}}}(w_{t}) = X_{\mathbf S_{\mathbf H}}^{\mathbf H}(\mathrm{id}) \times X_{\emptyset}^{\mathrm U_{2t+1}(q)}(\widetilde{w_{t}}).$$
The variety $X_{\mathbf S_{\mathbf H}}^{\mathbf H}(\mathrm{id})$ is just a point, whereas $X_{\emptyset}^{\mathrm U_{2t+1}(q)}(\widetilde{w_{t}})$ is a Deligne-Lusztig variety for the unitary group of size $2t +1$. We observe that the permutation $\widetilde{w_{t}}$ is a \textbf{Coxeter element} in $\mathfrak S_{2t+1}$, ie. the product of exactly one simple reflexion for each orbit of the Frobenius. Deligne-Lusztig varieties attached to Coxeter elements are called \textbf{Coxeter varieties}, and their cohomology with coefficients in $\overline{\mathbb Q_{\ell}}$ where $\ell$ is a prime number different from $p$ are well understood thanks to the work of Lusztig in \cite{cox}. Before stating the results of loc. cit. we recall parts of the representation theory of finite unitary groups.

\section{Irreducible unipotent representations of the finite unitary group}

\paragraph{}\label{classification} In this section, we recall the classification of the irreducible unipotent representations of the finite unitary group and we explain the underlying combinatorics.\\
We use the notations from \ref{notations}. For $w\in \mathbf W$, let $\dot{w}$ be a representative of $w$ in the normalizer $\mathrm N_{\mathbf G}(\mathbf T)$ of $\mathbf T$. By the Lang-Steinberg theorem, one can find $g\in \mathbf G$ such that $\dot{w} = g^{-1}F(g)$. Then $^g\mathbf T := g\mathbf T g^{-1}$ is another $F$-stable maximal torus, and $w \in \mathbf W$ is said to be the \textbf{type} of $^g \mathbf T$ with respect to $\mathbf T$. Every $F$-stable maximal torus arises in this manner. According to \cite{dl} Corollary 1.14, the $G$-conjugacy class of $^g \mathbf T$ only depends on the $F$-conjugacy class of the image $w$ of the element $g^{-1}F(g) \in \mathrm N_{\mathbf G}(\mathbf T)$ in the Weyl group $\mathbf W$. Here, two elements $w$ and $w'$ in $\mathbf W$ are said to be $F$-conjugates if there exists some element $u \in \mathbf W$ such that $w = u w' F(u)^{-1}$.\\
For every $w\in \mathbf W$, we fix $\mathbf T_w$ an $F$-stable maximal torus of type $w$ with respect to $\mathbf T$. The Deligne-Lusztig induction of the trivial representation of $\mathbf T_w$ is the virtual representation of $G$ defined by the formula 
$$R_w := \sum_{i\geq 0} (-1)^i\mathrm H^i_c(X_{\emptyset}(w))$$
where $X_{\emptyset}(w)$ is a Deligne-Lusztig variety for $\mathbf G$ as defined in \ref{defDLvariety}. According to \cite{dl} Theorem 1.6, the virtual representation $R_w$ only depends on the $F$-conjugacy class of $w$ in $\mathbf W$. An irreducible representation of $G$ is said to be \textbf{unipotent} if it occurs in $R_w$ for some $w\in \mathbf W$. The set of isomorphism classes of unipotent representations of $G$ is usually denoted $\mathcal E(G,1)$ following Lusztig's notations.

\paragraph{}Assume that the Coxeter graph of the reductive group $\mathbf G$ is a union of subgraphs of type $A_m$ (for various $m$). Let $\widecheck{\mathbf W}$ be the set of isomorphism classes of irreducible representations of its Weyl group $\mathbf W$. The action of the Frobenius $F$ on $\mathbf W$ induces an action on $\widecheck{\mathbf W}$, and we consider the fixed point set $\widecheck{\mathbf W}^F$. Then, the following classification theorem is well known.

\noindent \begin{theo}[\cite{ls} Theorem 2.2]
There is a bijection between $\widecheck{\mathbf W}^F$ and the set of isomorphism classes of irreducible unipotent representations of $G = \mathbf G^F$. 
\end{theo}

\noindent We recall how the bijection is constructed. If $V\in \widecheck{\mathbf W}^F$ is an irreducible $F$-stable representation of $\mathbf W$, according to loc. cit. there is a unique automorphism $\widetilde{F}$ of $V$ of finite order such that 
$$R(V) := \frac{1}{|\mathbf W|}\sum_{w\in \mathbf W} \mathrm{Trace}(w\circ \widetilde{F} \,|\, V)R_w$$
is an irreducible representation of $G$. Then the map $V \mapsto R(V)$ is the desired bijection.\\
In the case $\mathbf G = \mathrm{GL}_n$ with the Frobenius morphism $F$ being either standard or twisted (ie. $G = \mathrm{GL}_n(q)$ or $\mathrm{U}_n(q)$), we have an equality $\widecheck{\mathbf W}^F = \widecheck{\mathbf W}$. Moreover, the automorphism $\widetilde{F}$ is the identity in the former case and multiplication by $w_0$ on the latter, where $w_0$ is the element of maximal length in $\mathbf W$. Thus, in both cases the irreducible unipotent representations of $G$ are classified by the irreducible representations of the Weyl group $\mathbf W\simeq \mathfrak S_n$, which in turn are classified by partitions of $n$ or equivalently by Young diagrams. We now recall the underlying combinatorics behind the representation theory of the symmetric group. A general reference is \cite{james}.

\paragraph{}\label{partition}A partition of $n$ is a tuple $\lambda = (\lambda_1 \geq \ldots \geq \lambda_r)$ with $r\geq 1$ and the $\lambda_i$'s are positive integers such that $\lambda_1 + \ldots + \lambda_r = n$. The integer $n$ is called the length of the partition and it is also denoted by $|\lambda|$. If a partition has a series of repeating integers, it is common to write it shortly with an exponent. For instance, the partition $(3,3,2,2,1)$ of $11$ will be denoted $(3^2,2^2,1)$. Partitions of $n$ are naturally identified with Young diagrams of size $n$. The diagram attached to $\lambda$ has $r$ rows consisting successively of $\lambda_1, \ldots ,\lambda_r$ boxes.\\
To any partition $\lambda$ of $n$, one can naturally associate an irreducible representation $\chi_{\lambda}$ of the symmetric group $\mathfrak S_n$. An explicit construction is given, for instance, by the notion of Specht modules as explained in \cite{james} 7.1. In particular, the character $\chi_{(n)}$ is trivial while the character $\chi_{(1^n)}$ is the signature.

\paragraph{}\label{MurnaghanNakayama} We recall the Murnaghan-Nakayama rule which gives a recursive formula to evaluate the characters $\chi_{\lambda}$. We first need to introduce skew Young diagrams. Consider a pair $\lambda$ and $\mu$ of two partitions respectively of integers $n+k$ and $k$. Assume that the Young diagram of $\mu$ is contained in the Young diagram of $\lambda$. By removing the boxes corresponding to $\mu$ from the diagram of $\lambda$, one finds a shape consisting of $n$ boxes denoted by $\lambda\setminus \mu$. Any such shape is called a \textbf{skew Young diagram} of size $n$. It is said to be connected if one can go from a given box to any other by moving in a succession of adjacent boxes.\\
For example, consider the partition $\lambda = (3^2,2^2,1)$ and let us define the partitions $\mu_1 = (2^2)$, $\mu_2 = (3,1^2)$ and $\mu_3 = (2,1)$. The diagrams below correspond, from left to right, to the skew Young diagrams $\lambda \setminus \mu_i$ for $i=1,2,3$.
\begin{center}\ydiagram{2+1,2+1,2,2,1} \qquad \ydiagram{0,1+2,1+1,2,1} \qquad \ydiagram{2+1,1+2,2,2,1}\end{center}
The skew Young diagram $\lambda\setminus \mu_1$ is not connected, whereas the others are connected. A skew Young diagram is said to be a \textbf{border strip} if it is connected and if it does not contain any $2\times 2$ square. The \textbf{height} of a border strip is defined as its number of rows minus $1$. For instance, among the three skew Young diagrams above only $\lambda\setminus \mu_2$ is a border strip. Its size is $6$ and its height is $3$. \\
The characters $\chi_{\lambda}$ are class functions, so we only need to specify their values on conjugacy classes of the symmetric group $\mathfrak S_n$. These conjugacy classes are also naturally labelled by partitions of $n$. Indeed, up to ordering any permutation $\sigma \in \mathfrak S_n$ can be uniquely decomposed as a product of $r\geq 1$ cycles $c_1,\ldots ,c_r$ with disjoint supports. We denote by $\nu_i$ the cycle length of $c_i$ and we order them so that $\nu_1 \geq \ldots \geq \nu_r$. We allow cycles to have length $1$, so that the union of the supports of all the $c_i$'s is $\{1,\ldots ,n\}$. Thus, we obtain a partition $\nu = (\nu_1,\ldots,\nu_r)$ of $n$ which is called the \textbf{cycle type} of the permutation $\sigma$. Two permutations are conjugates in $\mathfrak S_n$ if and only if they share the same cycle type. We denote by $\chi_{\lambda}(\nu)$ the value of the character $\chi_{\lambda}$ on the conjugacy class labelled by $\nu$.

\noindent \begin{theo}[Murnaghan-Nakayama rule]
Let $\lambda$ and $\nu$ be two partitions of $n$. We have
$$\chi_{\lambda}(\nu) = \sum_{S} (-1)^{\mathrm{ht}(S)}\chi_{\lambda\setminus S}(\nu\setminus \nu_1),$$
where $S$ runs over the set of all border strips of size $\nu_1$ in the Young diagram of $\lambda$, such that removing $S$ from $\lambda$ gives again a Young diagram. Here, the integer $\mathrm{ht}(S) \in \mathbb Z_{\geq 0}$ is the height of the border stip $S$, the Young diagram $\lambda \setminus S$ is the one obtained by removing $S$ from $\lambda$, and $\nu\setminus\nu_1$ is the partition of $n - \nu_1$ obtained by removing $\nu_1$ from $\nu$. 
\end{theo}

\noindent Applying the Murnaghan-Nakayama rule in successions results in the value of $\chi_{\lambda}(\nu)$. We see in particular that $\chi_{(n)}$ is the trivial character whereas $\chi_{(1^n)}$ is the signature. We illustrate the computations with $\lambda = (3^2,2^2,1)$ and $\nu = (4^2,3)$. There are only two elligible border strips of size $4$ in the diagram of $\lambda$, as marked below.
\begin{center}
\ydiagram{3,3,1}*[\times]{0,0,1+1,2,1} \qquad and \qquad \ydiagram{3,1,1,1,1}*[\times]{0,1+2,1+1,1+1}
\end{center}
Both border strips have height $2$. Thus, the formula gives
$$\chi_{(3^2,2^2,1)}(4^2,3) = \chi_{(3^2,1)}(4,3) + \chi_{(3,1^4)}(4,3).$$
In each of the two Young diagrams obtained after removal of the border strips, there is only one elligible strip of size $4$, and eventually the three last remaining boxes form the final border strip of size $3$.
\begin{center}
\ydiagram{3}*[\times]{0,3,1} $\implies$ \ydiagram[\times]{3} \qquad \qquad \qquad \qquad \ydiagram{3}*[\times]{0,1,1,1,1} $\implies$  \ydiagram[\times]{3}
\end{center}
Taking the heights of the border strips into account, we find 
\begin{align*}
\chi_{(3^2,1)}(4,3) = - \chi_{(3)}(3) = - \chi_{\emptyset} = -1, & & \chi_{(3,1^4)}(4,3) = - \chi_{(3)}(3) = - \chi_{\emptyset} = -1.
\end{align*}
Here, $\emptyset$ denotes the empty partition. The computation finally gives $\chi_{(3^2,2^2,1)}(4^2,3) = -2$. 

\paragraph{}\label{dimensionformula} The irreducible unipotent representation of $\mathrm U_n(q)$ (resp. $\mathrm{GL}_n(q)$) associated to $\chi_{\lambda}$ by the bijection of \ref{classification} Theorem is denoted by $\rho_{\lambda}^{\mathrm U}$ (resp. $\rho_{\lambda}^{\mathrm{GL}}$). The partition $(n)$ corresponds to the trivial representation and $(1^n)$ to the Steinberg representation in both cases. We will omit the superscript when the group we are talking about is clear from the context.\\
The degrees of the representations $\rho_{\lambda}^{\mathrm{GL}}$ and $\rho_{\lambda}^{\mathrm U}$ are given by expressions known as \textbf{hook formula}. Given a box $\msquare$ in the Young diagram of $\lambda$, its \textbf{hook length} $h(\msquare)$ is $1$ plus the number of boxes lying below it or on its right. For instance, in the following figure the hook length of every box of the Young diagram of $\lambda = (3^2,2^2,1)$ has been written inside it. \\
\ytableausetup{mathmode, centertableaux}
\begin{center}
\begin{ytableau}
7 & 5 & 2 \\
6 & 4 & 1 \\
4 & 2 \\
3 & 1 \\
1
\end{ytableau}
\end{center}

\noindent \begin{prop}[\cite{geck2} Propositions 4.3.1 and 4.3.5] 
Let $\lambda = (\lambda_1 \geq \ldots \geq \lambda_r)$ be a partition of $n$. The degrees of the irreducible unipotent representations $\rho_{\lambda}^{\mathrm{GL}}$ and $\rho_{\lambda}^{\mathrm U}$, respectively of $\mathrm{GL}_n(q)$ and $\mathrm U_n(q)$, are given by the following formulas 
\begin{align*}
\deg(\rho_{\lambda}^{\mathrm{GL}}) = q^{a(\lambda)}\frac{\prod_{i=1}^n q^i - 1}{\prod_{\msquare \in \lambda} q^{h(\msquare)} - 1}, & & \deg(\rho_{\lambda}^{\mathrm{U}}) = q^{a(\lambda)}\frac{\prod_{i=1}^n q^i - (-1)^i}{\prod_{\msquare \in \lambda} q^{h(\msquare)} - (-1)^{h(\msquare)}},
\end{align*}
where $a(\lambda) = \sum_{i=1}^r (i-1)\lambda_i$. 
\end{prop}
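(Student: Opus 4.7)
The plan is to compute $\deg \rho_\lambda^{\mathrm{GL}}$ directly from the character identity of \textbf{4.1}, then transfer the result to the unitary case via Ennola duality.

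Evaluating $\rho_\lambda = \frac{1}{n!}\sum_{w \in \mathfrak S_n} \mathrm{Trace}(w\circ \widetilde F \mid V_\lambda)\, R_w$ at the identity of $G$ requires two ingredients: the degree formula for Deligne-Lusztig virtual characters, $R_w(1) = \varepsilon_{\mathbf G}\varepsilon_{\mathbf T_w}|G|_{p'}/|T_w|$, together with the character values $\chi_\lambda(w)$, which are controlled by the Murnaghan-Nakayama rule of \textbf{4.2}. For $\mathrm{GL}_n(q)$, $\widetilde F = \mathrm{id}$; the $\mathbb F_q$-rank of $\mathbf T_w$ equals the number of cycles $\ell(\nu)$ of $w$; and $|T_w| = \prod_i (q^{\nu_i} - 1)$ where $\nu$ is the cycle type of $w$. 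Grouping terms by cycle type yields
$$\deg \rho_\lambda^{\mathrm{GL}} = \prod_{i=1}^n(q^i - 1) \cdot \sum_{\nu \vdash n} \frac{(-1)^{n - \ell(\nu)}\,\chi_\lambda(\nu)}{z_\nu \prod_i (q^{\nu_i} - 1)},$$
where $z_\nu$ is the centraliser order of a permutation of cycle type $\nu$ in $\mathfrak S_n$.

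The next step is a purely combinatorial identity. Via the Frobenius characteristic map $\chi_\lambda \mapsto s_\lambda$ and the specialisation $p_k \mapsto 1/(1-q^k)$, the sum above (after absorbing the signs through $\prod_i(q^{\nu_i} - 1) = (-1)^{\ell(\nu)}\prod_i(1-q^{\nu_i})$) is precisely the principal specialisation $s_\lambda(1,q,q^2,\ldots)$, for which the $q$-analog of the Frame-Robinson-Thrall hook length formula gives the closed form $q^{a(\lambda)}/\prod_{\msquare \in \lambda}(1 - q^{h(\msquare)})$. Substituting back and rewriting $(1 - q^{h(\msquare)})$ in terms of $(q^{h(\msquare)} - 1)$ produces exactly the stated formula for $\deg \rho_\lambda^{\mathrm{GL}}$.

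For $\mathrm U_n(q)$ the same strategy applies, but now $\widetilde F$ acts as multiplication by the longest element $w_0$, the torus $\mathbf T_w$ has order $\prod_i (q^{\nu_i} - (-1)^{\nu_i})$, and additional signs arise from the non-split nature of $\mathbf G$. A careful bookkeeping shows that these modifications amount precisely to the substitution $q \mapsto -q$ in the $\mathrm{GL}_n(q)$ formula, up to a global sign that is fixed to be positive by positivity of the degree. This is the content of Ennola duality for unipotent degrees. I expect the main obstacle to be exactly this sign bookkeeping in the unitary case; the underlying symmetric-function identity is classical and the full argument is carried out in \cite{geck2}.
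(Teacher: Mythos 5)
The paper does not actually prove this statement: it is quoted directly from the cited reference (Propositions 4.3.1 and 4.3.5 of \cite{geck2}), so there is no internal argument to compare yours against. Your sketch is the standard proof of that classical result, and the $\mathrm{GL}_n(q)$ half is complete and correct as written: the decomposition $\rho_\lambda = \frac{1}{n!}\sum_w \chi_\lambda(w) R_w$, the degree formula $R_w(1) = \varepsilon_{\mathbf G}\varepsilon_{\mathbf T_w}|G|_{p'}/|T_w|$, the grouping by cycle type, and the identification of the resulting sum (after the sign absorption $\prod_i(q^{\nu_i}-1)=(-1)^{\ell(\nu)}\prod_i(1-q^{\nu_i})$, which does work out) with the principal specialisation $s_\lambda(1,q,q^2,\ldots) = q^{a(\lambda)}/\prod_{\msquare\in\lambda}(1-q^{h(\msquare)})$ all check out, and the signs cancel in pairs to give the stated positive expression.

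The only soft spot is exactly where you say it is: the unitary case. There "careful bookkeeping" is the entire content, and you ultimately defer it back to \cite{geck2}, which is circular if the goal is a self-contained proof. To close it one would use the paper's Theorem in \textbf{4.1} with $\widetilde F = w_0$, so that the sum runs over $\chi_\lambda(ww_0)$ rather than $\chi_\lambda(w)$; note that $|G|_{p'} = \prod_{i=1}^n\bigl(q^i-(-1)^i\bigr)$ and that $|T_w|$ and $\varepsilon_{\mathbf T_w}$ are governed by the $F$-conjugacy class of $w$, i.e.\ by the cycle type of $ww_0$ (your display of $|T_w|$ should be indexed this way); and then verify that after the change of summation variable $w\mapsto ww_0$ the resulting symmetric-function sum is the $\mathrm{GL}$ sum with $q$ replaced by $-q$, the overall sign being pinned down by positivity of degrees (equivalently by the sign built into the choice of $\widetilde F$). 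That is precisely the Lusztig--Srinivasan/Ennola computation, so your strategy is the right one; it is just not yet carried out. (Incidentally, the product $\prod_{i=1}^k$ in the statement should read $\prod_{i=1}^n$.)
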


\paragraph{}We recall from \cite{geck} 3.1 and 3.2 some definitions on classical Harish-Chandra theory. A parabolic subgroup of $G$ is a subgroup $P\subset G$ such that there exists an $F$-stable parabolic subgroup $\mathbf P$ of $\mathbf G$ with $P = \mathbf P^F$. A Levi complement of $G$ is a subgroup $L \subset G$ such that there exists an $F$-stable Levi complement $\mathbf L$ of $\mathbf G$, contained inside some $F$-stable parabolic subgroup, such that $L = \mathbf L^F$. Any parabolic subgroup $P$ of $G$ has a Levi complement $L$.\\
Let $L = \mathbf L^F$ be a Levi complement of $G$ inside a parabolic subgroup $P = \mathbf P^F$. Let $U = \mathbf U^F$ be the $F$-fixed points of the unipotent radical $\mathbf U$ of $\mathbf P$. The \textbf{Harish-Chandra induction and restriction functors} are defined by the following formulas.
\begin{align*}
\mathrm{R}^G_{L\subset P}: \mathrm{Rep}(L) & \to \mathrm{Rep}(G) & {}^*\mathrm{R}^G_{L\subset P}: \mathrm{Rep}(G) & \to \mathrm{Rep}(L) \\
 \sigma & \mapsto \mathbb C[G/U]\otimes_{\mathbb C[L]}\sigma & \rho & \mapsto \mathrm{Hom}_G(\mathbb C[G/U],\rho)
\end{align*}
\noindent Here, $\mathrm{Rep}(G)$ is the category of complex representations of $G$, and similarily for $\mathrm{Rep}(L)$. These two functors are adjoint, and up to isomorphism they do not depend on the choice of the parabolic subgroup $P$ containing the Levi complement $L$. For this reason, we will denote the functors $\mathrm R^G_L$ and ${}^*\mathrm R^G_L$ instead.\\
An irreducible representation of $G$ is called \textbf{cuspidal} if its Harish-Chandra restriction to any proper Levi complement is zero. We consider pairs $(L,X)$ where $L$ is a Levi complement of $G$ and $X$ is an irreducible representation of $L$. We define an order on the set of such pairs by setting $(L,X) \leq (M,Y)$ if $L\subset M$ and if $X$ occurs in the Harish-Chandra restriction of $Y$ to $L$. A pair is said to be \textbf{cuspidal} if it is minimal with respect to this order, in which case $X$ is a cuspidal representation of $L$. If $(L,X)$ is a cuspidal pair, we will denote by $[L,X]$ its conjugacy class under $G$.\\
Given a cuspidal pair $(L,X)$ of $G$, its associated \textbf{Harish-Chandra series} $\mathcal E(G,(L,X))$ is defined as the set of isomorphism classes of irreducible constituents in the induction of $X$ to $G$. Each series is non empty. Two of them are either disjoint or equal, the latter occuring if and only if the two cuspidal pairs are conjugates in $G$. Thus, the series are indexed by the conjugacy classes of cuspidal pairs $[L,X]$. Moreover, the isomorphism class of any irreducible representation of $G$ belongs to some Harish-Chandra series. Thus, Harish-Chandra series form a partition of the set of isomorphism classes of irreducible representations of $G$. If $\rho$ is an irreducible representation of $G$, the conjugacy class $[L,X]$ corresponding to the series to which $\rho$ belongs is called the \textbf{cuspidal support} of $\rho$. If $T$ denotes a maximal torus in $G$, then the series $\mathcal E(G,(T,1))$ is called the \textbf{unipotent principal series} of $G$.

\paragraph{}\label{HarishChandraSeries}For the general linear group $\mathrm{GL}_n(q)$, there is no unipotent cuspidal representation unless $n=1$, in which case the trivial representation is cuspidal. Moreover, the unipotent representations all belong to the principal series. The situation for the unitary group is very different. First, by \cite{classical} 9.2 and 9.4 there exists an irreducible unipotent cuspidal representation of $\mathrm U_n(q)$ if and only if $n$ is an integer of the form $n = \frac{x(x+1)}{2}$ for some $x\geq 0$, and when that is the case it is the one associated to the partition $\Delta_x := (x, x-1,\ldots,1)$, whose Young diagram has the distinctive shape of a reversed staircase. Here, as a convention $\mathrm U_0(q)$ denotes the  trivial group.\\
For example, here are the Young diagrams of $\Delta_1,\Delta_2$ and $\Delta_3$. Of course, the one of $\Delta_0$ the empty diagram.
\begin{center}\ydiagram{1} \quad \quad \ydiagram{2,1} \quad \quad \ydiagram{3,2,1}\end{center}

\noindent Furthermore the unipotent representations decompose non trivially into various Harish-Chandra series, as we recall from \cite{geck} 4.3.\\
We consider an integer $x\geq 0$ such that $n$ decomposes as $n = 2a + \frac{x(x+1)}{2}$ for some $a\geq 0$. We also consider the standard Levi complement $L_x \simeq \mathrm{GL}_1(q^2)^a \times \mathrm U_{\frac{x(x+1)}{2}}(q)$ which corresponds to the choice of simple reflexions $s_{a+1}, \ldots , s_{n-a-1}$. We write $\rho_x$ for the inflation of $\rho^{\mathrm U}_{\Delta_x}$ to an irreducible representation of $L_x$. Then $\mathcal E(\mathrm U_n(q),1)$ decomposes as the disjoint union of all the Harish-Chandra series $\mathcal E(\mathrm U_n(q), (L_x,\rho_x))$ for all possible choices of $x$. With these notations, the principal unipotent series corresponds to $x=0$ if $n$ is even and to $x=1$ if $n$ is odd.

\paragraph{}Given an irreducible unipotent representation $\rho_{\lambda}$ of $\mathrm U_n(q)$, there is a combinatorical way of determining the Harish-Chandra series to which it belongs. We consider the Young diagram of $\lambda$. We call \textbf{domino} any pair of adjacent boxes in the diagram. It may be either vertical or horizontal. We remove dominoes from the rim of the diagram of $\lambda$ so that the resulting shape is again a Young diagram, until one can not proceed further. This process results in the Young diagram of the partition $\Delta_x$ for some $x\geq 0$, and it is called the \textbf{$2$-core} of $\lambda$. It does not depend on the successive choices for the dominoes. Then, the representation $\rho_{\lambda}$ belongs to the series $\mathcal E(\mathrm U_n(q),(L_x,\rho_x))$ if and only if $\lambda$ has $2$-core $\Delta_x$. \\
For instance, the diagram $\lambda = (3^2,2^2,1)$ has $2$-core $\Delta_1$, as it can be determined by the following steps. We put crosses inside the successive dominoes that we remove from the diagram. Thus, the unipotent representation $\rho_{\lambda}$ of $\mathrm{U}_{11}(q)$ belongs to the unipotent principal series $\mathcal E(\mathrm U_{11}(q),(L_1,\rho_1))$.
\begin{center}
\ydiagram{3,3,1,1,1}*[\times]{0,0,1+1,1+1} $\implies$ \ydiagram{3,1,1,1,1}*[\times]{0,1+2} $\implies$ \ydiagram{3,1,1}*[\times]{0,0,0,1,1} $\implies$ \ydiagram{1,1,1}*[\times]{1+2} $\implies$ \ydiagram{1}*[\times]{0,1,1} $\implies$ \ydiagram{1}
\end{center}

\section{Computing Harish-Chandra induction of unipotent representations in the finite unitary group}

\paragraph{}\label{comparisontheorem} In this paragraph, we recover the notations from \ref{notations}. We recall from \cite{geck} 3.2 how Harish-Chandra induction of unipotent representations can be explicitely computed. Let $W = \mathbf W^F$ be the Weyl group of $G$. It is still a Coxeter group, whose set of simple reflexions $S$ is identified with the set of $F$-orbits on $\mathbf S$. Let $(L,X)$ be a cuspidal pair of $G$. The \textbf{relative Weyl group of $L$} is given by $W_G(L) := N_{\mathbf G}(\mathbf L)^F/L\subset W$. The relative Weyl group of the pair $(L,X)$, also called \textbf{the ramification group of $X$} in \cite{howlett}, is the subgroup $W_G(L,X)$ of $W_G(L)$ consisting of elements $w$ such that $wX \simeq X$, where $wX$ denotes the representation $wX(g) := X(wgw^{-1})$ of $L$. It is yet again a Coxeter group if $\mathbf G$ has a connected center or if $X$ is unipotent. \\
Theorem 3.2.5 of \cite{geck} establishes an isomorphism between the endomorphism algebra of the induced representation $\mathrm R_{L}^{G}(X)$ and the complex group ring of the ramification group $W_G(L,X)$. In particular, this gives an bijection between the Harish-Chandra series $\mathcal E(G,(L,X))$ and the set $\mathrm{Irr}(W_G(L,X))$ of isomorphism classes of irreducible complex characters of $W_G(L,X)$. These bijections for $G$ and for various Levi complements in $G$ can be chosen to be compatible with Harish-Chandra induction. This is known as Howlett and Lehrer's comparison theorem which was proved in \cite{howlett}.

\noindent \begin{theo}[\cite{geck} Comparison Theorem 3.2.7] 
Let $(L,X)$ be a cuspidal pair for the finite group of Lie type $G$. For every Levi complement $M$ in $G$ containing $L$, the bijection between $\mathrm{Irr}(W_M(L,X))$ and $\mathcal E(M,(L,X))$ can be taken so that the diagrams 
$$
\begin{tikzcd}
    \mathbb Z\mathcal E(G,(L,X)) \arrow{rr}{\sim} & & \mathbb Z\mathrm{Irr}(W_{G}(L,X)) \\[10pt]
    \mathbb Z\mathcal E(M,(L,X)) \arrow{rr}{\sim} \arrow{u}{\mathrm R_M^{G}} & & \mathbb Z\mathrm{Irr}(W_{M}(L,X)) \arrow[swap]{u}{\mathrm{Ind}}
\end{tikzcd}
\quad\quad
\begin{tikzcd}
    \mathbb Z\mathcal E(G,(L,X)) \arrow{rr}{\sim} \arrow[swap]{d}{^*\mathrm R_M^{G}} & & \mathbb Z\mathrm{Irr}(W_{G}(L,X)) \arrow{d}{\mathrm{Res}} \\[10pt]
    \mathbb Z\mathcal E(M,(L,X)) \arrow{rr}{\sim}  & & \mathbb Z\mathrm{Irr}(W_{M}(L,X)) 
\end{tikzcd}
$$
are commutative. Here, $\mathrm{Ind}$ and $\mathrm{Res}$ on the right-hand side of the diagrams are the classical induction and restriction functors for representations of finite groups.
\end{theo}

\noindent In other words, computing Harish-Chandra induction and restrictions of representations in $G$ can be entirely done at the level of the associated Coxeter groups. In order to use this statement for unitary groups, we need to make the horizontal arrows explicit and to understand the combinatorics behind induction and restriction of the irreducible representations of the relevant Coxeter groups. This has been explained consistently in \cite{fong} for classical groups.

\paragraph{}\label{unitaryrelativeWeylgroup} We focus on the case of the unitary group. Let $x\geq 0$ such that $n = 2a + \frac{x(x+1)}{2}$ for some $a\geq 0$. We consider the cuspidal pair $(L_x,\rho_x)$ as in \ref{HarishChandraSeries}, with $L_x = \mathrm{GL}_1(q^2)^a\times \mathrm{U}_{\frac{x(x+1)}{2}}(q)$. The relative Weyl group $W_{\mathrm{U}_n(q)}(L_x)$ is isomorphic to the Coxeter group of type $B_a$, which is usually denoted by $W_a$. Indeed, the Weyl group $W_{\mathrm U_n(q)}(L_x)$ admits a presentation by elements $\sigma_1,\ldots, \sigma_{a-1}$ and $\theta$ of order $2$ satisfying the relations
\begin{align*}
\theta\sigma_1\theta\sigma_1  & = \sigma_1 \theta \sigma_1 \theta, & \theta\sigma_i & = \sigma_i \theta, & & \forall \; 2\leq i \leq m-1. \\
\sigma_i \sigma_{i+1} \sigma_i & = \sigma_{i+1} \sigma_i \sigma_{i+1}, & \sigma_i \sigma_j & = \sigma_j \sigma_i, & & \forall\; |i-j| \geq 2.
\end{align*} 
Explicitely, the element $\sigma_i$ is represented by the permutation matrix of the double transposition $(i\quad i+1)(n-i\quad n-i+1)$ and the element $\theta$ by the matrix of the transposition $(1 \quad n)$, all of which belong to $N_{\mathrm U_n(q)}(L_x)$. This presentation coincide with the Coxeter group $W_a$ of type $B_a$, see in \cite{geck2} 1.4.1. Moreover, the ramification group $W_{\mathrm U_n(q)}(L_x,\rho_x)$ is equal to the whole of $W_{\mathrm U_n(q)}(L_x) \simeq W_a$. The identification between the ramification group and the Coxeter group $W_a$ is naturally induced by the isomorphism between the absolute Weyl group $\mathbf W$ and the symmetric group $\mathfrak S_n$. In order to proceed further, we need to explain the representation theory of the group $W_a$.

\paragraph{}Let $W_a$ be a Coxeter group of type $B_a$ given with a presentation by elements $\sigma_1,\ldots ,\sigma_{a-1}$ and $\theta$ satisfying equations as in \ref{unitaryrelativeWeylgroup}. For $1\leq i \leq a-1$, we define $\theta_i = \sigma_i\ldots \sigma_1\theta\sigma_{1}\ldots \sigma_{i}$. In particular $\theta_0 = \theta$. Following \cite{geck2} 3.4.2, we define \textbf{signed blocks} to be elements of the following form. Given $k\geq 0$ and $e\geq 1$ such that $k + e \leq a$, the positive (resp. negative) block of length $e$ starting at $k$ is 
\begin{align*}
b^+_{k,e} := \sigma_{k+1}\sigma_{k+2}\ldots \sigma_{k+e-1}, & & b^-_{k,e} := \theta_k\sigma_{k+1}\sigma_{k+2}\ldots \sigma_{k+e-1}.
\end{align*}
A \textbf{bipartition} of $a$ is an ordered pair $(\alpha,\beta)$ where $\alpha$ is a partition of some integer $0\leq j \leq a$ and $\beta$ is a partition of $a - j$. Given a bipartition $(\alpha,\beta)$ of $a$ and writing $\alpha = (\alpha_1,\ldots ,\alpha_r)$ and $\beta = (\beta_1,\ldots ,\beta_s)$, we define the element 
$$w_{\alpha,\beta} := b^-_{k_1,\beta_1}\ldots b^-_{k_s,\beta_s}b^+_{k_{s+1},\alpha_1}\ldots b^+_{k_{s+r},\alpha_r}$$
where $k_1 = 0$, $k_{i+1} = k_i + \beta_i$ if $1\leq i \leq s$ and $k_{i+1} = k_i + \alpha_{i - s}$ if $s+1\leq i \leq s+r - 1$. In particular, we have $k_{r+s} + \alpha_r = a$. According to \cite{geck2} Proposition 3.4.7, the conjugacy classes in $W_a$ are labelled by bipartitions of $a$, and a representative of minimal length of the conjugacy class corresponding to the bipartition $(\alpha,\beta)$ is given by $w_{\alpha,\beta}$. Thus, the irreducible representations of $W_a$ can be labelled by bipartitions of $a$ as well. An explicit construction of these irreducible representations is given in \cite{geck2} 5.5. We will not recall it, however we may again give a method to compute the character values, similar to the Murnaghan-Nakayama formula. The character of the irreducible representation of $W_a$ associated in loc. cit. to the bipartition $(\alpha,\beta)$ of $a$ will be denoted $\chi_{\alpha,\beta}$. If $(\gamma,\delta)$ is another bipartition of $a$, we denote by $\chi_{\alpha,\beta}(\gamma,\delta)$ the value of the character $\chi_{\alpha,\beta}$ on the conjugacy class of $W_a$ labelled by $(\gamma,\delta)$.\\
One can think of a bipartition $(\alpha,\beta)$ of $a$ as an ordered pair of two Young diagrams of combined size $a$. A \textbf{border strip} of a bipartition $(\alpha,\beta)$ is a border strip either of the partition $\alpha$ or of $\beta$. The height of a border strip is defined in the same way.

\noindent \begin{theo}[\cite{geck2} Theorem 10.3.1]
Let $(\alpha,\beta)$ and $(\gamma,\delta)$ be two bipartitions of $a$. If $\gamma \not = \emptyset$, let $\epsilon = 1$ and let $x$ be the last integer in the partition $\gamma$. If $\gamma = \emptyset$, let $\epsilon = -1$ and let $x$ be the last integer of the partition $\delta$. We have 
$$\chi_{\alpha,\beta}(\gamma,\delta) = \sum_S (-1)^{\mathrm{ht}(S)}\epsilon^{f_S}\chi_{(\alpha,\beta)\setminus S}((\gamma,\delta)\setminus x),$$
where $S$ runs over the set of all border strips of size $x$ in the bipartition $(\alpha,\beta)$, such that removing $S$ from $(\alpha,\beta)$ gives again a pair of Young diagrams. Here, the pair of Young diagrams $(\alpha,\beta)\setminus S$ is the one obtained after removing $S$, and $(\gamma,\delta)\setminus x$ is the bipartition obtained by removing $x$ from $(\gamma,\delta)$. Eventually, the integer $f_S$ is $0$ if $S$ is a border strip of $\alpha$, and it is $1$ if $S$ is a border strip of $\beta$.
\end{theo}

\noindent Applying this formula in successions results in the value of $\chi_{(\alpha,\beta)}(\gamma,\delta)$. In particular, one sees that $\chi_{(a),\emptyset}$ is the trivial character and $\chi_{\emptyset,(1^a)}$ is the signature character of $W_a$. We illustrate the computations with $(\alpha,\beta) = ((3,1^2),(4,2))$ and $(\gamma,d) = ((4),(5,2))$. There is only elligible border strip of size $4$ in the pair of diagrams $(\alpha,\beta)$, as marked below.
\begin{center}
\ydiagram{3,1,1}\quad , \quad \ydiagram{1,1}*[\times]{1+3,1+1}
\end{center}
This border strip $S$ has height $1$. It was taken in the diagram of $\beta$ so $f_S = 1$. Since $\gamma \not = \emptyset$ we have $\epsilon = 1$. Applying the formula, we obtain 
$$\chi_{(3,1^2),(4,2)}((4),(5,2)) = - \chi_{(3,1^2),(1^2)}(\emptyset,(5,2)).$$
We are now looking for border strips of size $2$ in the pair of diagrams of the bipartition $(3,1^2),(1^2)$. Three of them are eligible, as marked below. 
\begin{center}
\ydiagram{3,1,1}\quad , \quad \ydiagram[\times]{1,1} \quad \text{and} \quad \ydiagram{1,1,1}*[\times]{1+2} \quad , \quad \ydiagram{1,1} \quad \text{and} \quad \ydiagram{3}*[\times]{0,1,1} \quad , \quad \ydiagram{1,1}
\end{center}
These three border strips have respective heights $1,0$ and $1$. The corresponding values of $f_S$ are respectively $1$, $0$ and $0$. Moreover, the partition $\gamma$ is now empty so $\epsilon = -1$. The formula gives 
$$\chi_{(3,1^2),(1^2)}(\emptyset,(5,2)) = \chi_{(3,1^2),\emptyset}(\emptyset,(5)) + \chi_{(1^3),(1^2)}(\emptyset,(5)) - \chi_{(3),(1^2)}(\emptyset,(5)).$$
In the bipartitions $((1^3),(1^2))$ and $((3),(1^2))$ there is no border strip of size $5$ at all. Thus, the formula tells us that the corresponding character values are $0$. On the other hand, the bipartition $((3,1^2),\emptyset)$ consists of a single border strip of size $5$ and height $2$. The formula gives 
$$\chi_{(3,1^2),\emptyset}(\emptyset,(5)) = \chi_{\emptyset} = 1.$$
Putting things together, we deduce that $\chi_{(3,1^2),(4,2)}((4),(5,2)) = -1$.

\paragraph{}\label{newlabel}We may now describe the horizontal arrows in \ref{comparisontheorem} Theorem for the unitary group. To do this, we need an alternate labelling of the irreducible unipotent representations of the unitary group. We refer to \cite{fong} for the details.\\
The new labelling of the irreducible unipotent representations of $\mathrm U_n(q)$ involves triples of the form $(\Delta_x,\alpha,\beta)$ where $x$ is a nonnegative integer such that $n = 2a + \frac{x(x+1)}{2}$ for some integer $a\geq 0$, and where $(\alpha,\beta)$ is a bipartition of $a$. The corresponding representation will be denoted $\rho_{\Delta_x,\alpha,\beta}$. With this labelling, the unipotent Harish-Chandra series $\mathcal E(\mathrm U_n(q),(L_x,\rho_x))$ consists precisely of all the representations $\rho_{\Delta_x,\alpha,\beta}$ with $(\alpha,\beta)$ varying over all bipartitions of $a$. The bijection $\mathbb Z\mathcal E(\mathrm U_n(q),(L_x,\rho_x)) \xrightarrow{\sim} \mathbb Z\mathrm{Irr}(W_{\mathrm U_n(q)}(L_x,\rho_x))$ involved in the Comparison theorem simply sends $\rho_{\Delta_x,\alpha,\beta}$ to $\chi_{\alpha,\beta}$. Here, we made use of the identification $W_{\mathrm U_n(q)}(L_x,\rho_x) \simeq W_a$ as in \ref{unitaryrelativeWeylgroup}.\\
More generally, if $M$ is a standard Levi complement in $\mathrm U_n(q)$ containing $L_x$, we may write $M \simeq \mathrm U_b(q)\times \mathrm{GL}_{a_1}(q^2) \times \ldots \times \mathrm{GL}_{a_r}(q^2)$ where $n = 2(a_1 + \ldots + a_r) + b$ and $b\geq \frac{x(x+1)}{2}$. The irreducible unipotent representations of $M$ in the Harish-Chandra series $\mathcal E(M,(L_x,\rho_x))$ are those of the form $\rho_{\Delta_x,\alpha,\beta}\boxtimes \rho^{\mathrm{GL}}_{\lambda_1}\boxtimes \ldots \boxtimes \rho^{\mathrm{GL}}_{\lambda_r}$ where $\lambda_i$ is a partition of $a_i$ for $1\leq i \leq r$ and $(\alpha,\beta)$ is a bipartition of the integer $c:= \frac{1}{2}\left(b - \frac{x(x+1)}{2}\right)$. On the other hand, the relative Weyl group $W_M(L_x,\rho_x)$ can be identified with the subgroup of $W_{\mathrm U_n(q)}(L_x,\rho_x) \simeq W_a$ isomorphic to the product $W_c\times \mathfrak S_{a_1}\times \ldots \times \mathfrak S_{a_r}$ (note that $c + a_1 + \ldots + a_r = a$). With the notations of \ref{unitaryrelativeWeylgroup}, the $W_c$-component is generated by the elements $\theta,\sigma_1,\ldots \sigma_{c-1}$, the $\mathfrak S_{a_1}$-component by the elements $\sigma_{c+1}\ldots ,\sigma_{c+a_1-1}$, and so on. Irreducible characters of $W_M(L_x,\rho_x)$ have the shape $\chi_{\alpha,\beta}\boxtimes \chi_{\lambda_1} \boxtimes \ldots \boxtimes \chi_{\lambda_r}$ where $(\alpha,\beta)$ is a bipartition of $c$ and $\lambda_i$ is a partition of $a_i$ for $1\leq i \leq r$.\\
Then, according to \cite{fong} (4.2), the bijection $\mathbb Z\mathcal E(\mathrm M,(L_x,\rho_x)) \xrightarrow{\sim} \mathbb Z\mathrm{Irr}(W_M(L_x,\rho_x))$ involved in the Comparison theorem in \ref{comparisontheorem} sends $\rho_{\Delta_x,\alpha,\beta}\boxtimes \rho^{\mathrm{GL}}_{\lambda_1}\boxtimes \ldots \boxtimes \rho^{\mathrm{GL}}_{\lambda_r}$ to $\chi_{\alpha,\beta}\boxtimes \chi_{\lambda_1} \boxtimes \ldots \boxtimes \chi_{\lambda_r}$. 

\paragraph{}\label{twolabels} We explain how the two different labellings of the irreducible unipotent representations of $\mathrm{U}_n(q)$ are related. To do this, one needs the notion of $2$-quotient. For the following definitions, we allow partitions to have $0$ terms at the end. Thus, let us write $\lambda = (\lambda_1 \geq \ldots \geq \lambda_r)$ with $\lambda_r \geq 0$. The \textbf{$\beta$-set} of $\lambda$ is the sequence of decreasing nonnegative integers $\beta_i := \lambda_i + r - i$ for $1\leq i \leq r$. Mapping a partition $\lambda$ to its $\beta$-set gives a bijection between the set of partitions having $r$ terms and the set of decreasing sequences of nonnegative integers of length $r$. The inverse mapping sends a sequence $(\beta_1> \ldots > \beta_r \geq 0)$ to the partition $\lambda$ given by $\lambda_i = \beta_i + i - r$.\\
Let $\lambda$ be a partition of $n$ as above, and let $\beta$ be its $\beta$-set. We let $\beta_{\text{even}}$ (resp. $\beta_{\text{odd}}$) be the subsequence consisting of all even (resp. odd) integers of $\beta$. Then, we define the following sequences.
\begin{align*}
\beta^0 := \left(\frac{\beta_i}{2} \,\middle|\, \beta_i \in \beta_{\text{even}}\right) & & \beta^1 := \left(\frac{\beta_i - 1}{2} \,\middle|\, \beta_i \in \beta_{\text{odd}}\right) 
\end{align*}
The sequences $\beta^0$ and $\beta^1$ are the $\beta$-sets of two partitions, which we call $\mu^0$ and $\mu^1$ respectively. Then, the \textbf{$2$-quotient} of $\lambda$ is the bipartition $(\mu^0,\mu^1)$ if $r$ is odd, and $(\mu^1,\mu^0)$ if $r$ is even. We note that the ordering of $\mu^0$ and $\mu^1$ in the $2$-quotient may vary in the literature. Here, we followed the conventions of \cite{fong} section 1. A different ordering is used in \cite{james} 2.7.29. In loc. cit. Theorem 2.7.37, another construction of the $2$-quotient using Young diagrams is proposed.\\
Let $\lambda'$ be another partition which differs from $\lambda$ only by $0$ terms at the end. While the $\beta$-sets of $\lambda$ and $\lambda'$ are not the same, the resulting $2$-quotients are equal up to $0$ terms at the end of the partitions. Thus, from now on we identify all partitions differing only from $0$ terms by removing all of them. The $2$-quotient of a partition is then well-defined.

\noindent \begin{theo}[\cite{james} Theorem 2.7.30]
A partition $\lambda$ is uniquely characterized by the data of its $2$-core $\Delta_x$ and its $2$-quotient $(\lambda^0,\lambda^1)$. Moreover, the lengths of these partitions are related by the equation 
$$|\lambda| = |\Delta_x| +2(|\lambda^0| + |\lambda^1|)$$
and $|\Delta_x| = \frac{x(x+1)}{2}$.
\end{theo}

\noindent For instance, the $2$-quotient of the partition $\lambda = (3^2,2^2,1)$ is $(2^2,1)$. Recall that the $2$-core of $\lambda$ is $\Delta_1$. Thus, the equation on the lengths of the partitions is satisfied, as we have $11 = 1 + 2(4+1)$.\\
We may now relate the two labellings $\{\rho^{\mathrm U}_{\lambda}\}$ and $\{\rho_{\Delta_x,\alpha,\beta}\}$ of the irreducible unipotent representations of $\mathrm U_n(q)$ together. 

\noindent \begin{prop}[\cite{fong} Appendix]
Let $\lambda$ be a partition of $n$. Denote by $\Delta_{y}$ its $2$-core and by $(\lambda^0,\lambda^1)$ its $2$-quotient. On the other hand, let $x\geq 0$ be such that $n = 2a + \frac{x(x+1)}{2}$ for some $a\geq 0$ and let $(\alpha,\beta)$ be a bipartition of $a$. Then the irreducible representations $\rho^{U}_{\lambda}$ and $\rho_{\Delta_{x},\alpha,\beta}$ are equivalent if and only if $x=y$ and $(\lambda^0,\lambda^1) = (\alpha,\beta)$ if $x$ is even or $(\lambda^0,\lambda^1) = (\beta,\alpha)$ if $x$ is odd.
\end{prop}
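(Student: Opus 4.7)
The plan is to reduce the statement to a comparison between the two labellings via the Comparison Theorem \textbf{5.1}, and then to proceed by induction on $a$ using Harish-Chandra restriction. First I would observe that $\rho_\lambda^{\mathrm U}$ lies in the Harish-Chandra series $\mathcal E(\mathrm U_n(q),(L_{t'},\rho_{t'}))$ where $t'$ is characterized by the $2$-core $\Delta_{t'}$ of $\lambda$ (\textbf{4.6}), while $\rho_{\Delta_t,\alpha,\beta}$ lies in $\mathcal E(\mathrm U_n(q),(L_t,\rho_t))$ by the very definition of the second labelling. Since distinct Harish-Chandra series are disjoint, any equivalence of the two representations forces $t = t'$.

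Assuming $t = t'$, by \textbf{5.4} the Comparison Theorem sends $\rho_{\Delta_t,\alpha,\beta}$ to the irreducible character $\chi_{\alpha,\beta}$ of the relative Weyl group $W_a$. The task thus becomes: identify the character of $W_a$ attached to $\rho_\lambda^{\mathrm U}$, and show that it equals $\chi_{\lambda^0,\lambda^1}$ or $\chi_{\lambda^1,\lambda^0}$ according to the parity of $t$. I would proceed by induction on $a$. The base case $a = 0$ holds trivially: the series contains only the cuspidal representation $\rho_t = \rho_{\Delta_t}^{\mathrm U}$, whose $2$-quotient is empty, and both labellings attach to it the trivial character of the trivial group $W_0$.

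For the inductive step, I would choose a maximal Levi subgroup $M \simeq \mathrm U_{n-2}(q) \times \mathrm{GL}_1(q^2) \subset \mathrm U_n(q)$ containing $L_t$. By the Comparison Theorem applied to $M$, Harish-Chandra restriction from $\mathrm U_n(q)$ to $M$ corresponds to restriction of characters from $W_a$ to the parabolic subgroup $W_{a-1}\times \mathfrak S_1$, and on the Harish-Chandra series $\mathcal E(M,(L_t,\rho_t))$ the inductive hypothesis identifies the smaller representations. On the partition side, the restriction of $\rho_\lambda^{\mathrm U}$ to $M$ is multiplicity-free and governed by a Pieri-type rule describing the decomposition in terms of removal of a horizontal or vertical domino from the Young diagram of $\lambda$. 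The central combinatorial input, essentially due to Littlewood and recorded in \cite{james} chapter 2, is that the $2$-quotient intertwines these two decomposition rules: removing a domino from $\lambda$ amounts to removing a single box from either $\lambda^0$ or $\lambda^1$. Matching both decompositions via the Comparison Theorem then pins down the character of $W_a$ attached to $\rho_\lambda^{\mathrm U}$ up to the ordering of the bipartition.

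The main obstacle is the bookkeeping of this ordering convention in the definition of the $2$-quotient. Since the position of $\mu^0$ versus $\mu^1$ inside the bipartition depends on the parity of the number of parts of the $\beta$-set of $\lambda$, which is itself dictated by the $2$-core $\Delta_t$, one must verify that the Pieri rule for adding a horizontal or vertical domino on the partition side corresponds consistently to adding a box to the first or second component on the bipartition side, with a switch of $(\lambda^0,\lambda^1)$ exactly when $t$ is odd. This sign-and-ordering verification is the technical heart of the argument and the place where a careful case analysis is required; once it is settled, the inductive step closes and the stated bijection between the two labellings follows.
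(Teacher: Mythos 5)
First, note that the paper itself does not prove this proposition: it is quoted from the appendix of \cite{fong}, so there is no internal argument to compare yours against; your sketch has to stand on its own. It does not, for the following reason. The key input you invoke, a \emph{Pieri-type rule for Harish-Chandra restriction of $\rho^{\mathrm U}_{\lambda}$ phrased as removal of a horizontal or vertical domino from the Young diagram of $\lambda$}, is not an available ingredient. What \cite{james} provides is only the combinatorial dictionary: removing a domino from $\lambda$ corresponds to removing one box from its $2$-quotient. The representation-theoretic assertion that $^*\mathrm R^{\mathrm U_n(q)}_{M}(\rho^{\mathrm U}_{\lambda})$ with $M\simeq \mathrm U_{n-2}(q)\times\mathrm{GL}_1(q^2)$ decomposes according to domino removal on $\lambda$ is, modulo that dictionary, the Comparison Theorem \textbf{5.1} and the type-$B$ Pieri rule \textbf{5.7}, logically equivalent to the proposition you are trying to prove; assuming it is circular. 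To make the induction honest you would have to establish the domino branching rule directly from the definition of the $\lambda$-labelling in \textbf{4.1}, i.e.\ from $\rho_{\lambda}=R(\chi_{\lambda})$, by computing $^*\mathrm R$ on the virtual characters $R_w$ and running a Murnaghan--Nakayama argument (removal of $2$-hooks coming from the $F$-twisted $\mathrm{GL}_1(q^2)$ factor). None of that is indicated in your sketch.

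Second, even granting both branching rules, an induction on $a$ driven by a single Harish-Chandra restriction cannot pin down the character of $W_a$ attached to $\rho^{\mathrm U}_{\lambda}$: distinct bipartitions can have identical sets of one-box removals (already $((1),\emptyset)$ and $(\emptyset,(1))$ both restrict to the trivial character of $W_0$), which on the unitary side is the failure of restriction to separate, for instance, the trivial and Steinberg representations of $\mathrm U_3(q)$. This is exactly the ambiguity the paper has to break by a degree computation in the proof of \textbf{6.3} (case $k=1$), using \textbf{6.1}(5) and the hook formula of \textbf{4.4}. So the \enquote{ordering verification} you defer to a \enquote{careful case analysis} is not mere bookkeeping of the convention of \textbf{5.5}: without injecting an additional invariant (degrees/generic degrees, or Frobenius eigenvalues, or the Hecke-algebra data underlying \textbf{5.1}) the induction only determines the label up to swapping the two components of the bipartition at each stage, which is strictly weaker than the statement, and in particular cannot detect that the swap occurs precisely when $t$ is odd.
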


\noindent For instance, for $\lambda = (3^2,2^2,1)$ the representation $\rho_{\lambda}^{\mathrm{U}}$ is equivalent to $\rho_{\Delta_1,(1),(2^2)}$.

\paragraph{}\label{Pierirule} In order to apply the comparison theorem \ref{comparisontheorem} for unitary groups, it remains to understand how to compute inductions in Coxeter groups of type $B$. Such computations are carried out in \cite{geck2} Section 6.1. It turns out that we will only need one specific case of such inductions, and the corresponding method is known as the Pieri rule for groups of type $B$.

\noindent \begin{prop}[\cite{geck2} 6.1.9] Let $a \geq 1$ and consider $r,s \geq 0$ such that $r+s = a$. We think of the group $W_r \times \mathfrak S_s$ as a subgroup of $W_a$ as in \ref{newlabel}.
\begin{enumerate}[label={--},noitemsep,topsep=0pt]
\item Let $(\alpha,\beta)$ be a bipartition of $r$. Then the induced character 
$$\mathrm{Ind}_{W_r \times \mathfrak S_s}^{W_a}\left(\chi_{(\alpha,\beta)} \boxtimes \chi_{(s)} \right)$$
is the multiplicity-free sum of all the characters $\chi_{\gamma,\delta}$ such that for some $0\leq k \leq s$, the Young diagram of $\gamma$ (resp. $\delta$) can be obtained from that of $\alpha$ (resp. $\beta$) by adding $k$ boxes (resp. $s-k$ boxes) so that no two of them lie in the same column.

\item Let $(\gamma,\delta)$ be a bipartition of $a$. The restricted character 
$$\mathrm{Res}_{W_r}^{W_a}\left(\chi_{\gamma,\delta}\right)$$
is the multiplicity-free sum of all the characters $\chi_{(\alpha,\beta)}$ such that for some $0\leq k \leq s$, the Young diagram of $\alpha$ (resp. $\beta$) can be obtained from that of $\gamma$ (resp. $\delta$) by deleting $k$ boxes (resp. $s-k$ boxes) so that no two of them lie in the same column.
\end{enumerate}
\end{prop}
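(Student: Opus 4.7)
The plan is to deduce the type $B$ Pieri rule from the classical type $A$ Pieri rule together with one auxiliary computation, via the well-known ring isomorphism between $\mathcal R := \bigoplus_{a \geq 0} R_{\mathbb C}(W_a)$, equipped with multiplication $\varphi \cdot \psi := \mathrm{Ind}_{W_r \times W_s}^{W_{r+s}}(\varphi \boxtimes \psi)$, and the bigraded ring $\Lambda \otimes \Lambda$ of pairs of symmetric functions, under which $\chi_{\alpha, \beta}$ is identified with $s_\alpha \otimes s_\beta$. The classical Pieri rule $s_\mu s_{(k)} = \sum_\nu s_\nu$, with $\nu$ ranging over partitions obtained from $\mu$ by adding $k$ boxes no two in the same column, will then enter through each of the two tensor factors.

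The first step is to compute the auxiliary induction $\mathrm{Ind}_{\mathfrak S_s}^{W_s}(\chi_{(s)})$. Clifford theory applied to the normal subgroup $(\mathbb Z/2\mathbb Z)^s \trianglelefteq W_s$ shows that the $\mathfrak S_s$-orbits of characters of $(\mathbb Z/2\mathbb Z)^s$ are indexed by the cardinality of their support, and identifying each resulting Clifford-induced character with $\chi_{(s-k),(k)}$ yields
\begin{align*}
\mathrm{Ind}_{\mathfrak S_s}^{W_s}(\chi_{(s)}) = \sum_{d=0}^{s} \chi_{(d), (s-d)},
\end{align*}
which is itself the $r=0$ special case of the first bullet of the statement.

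The second step is induction in stages, writing
\begin{align*}
\mathrm{Ind}_{W_r \times \mathfrak S_s}^{W_a}\bigl(\chi_{\alpha, \beta} \boxtimes \chi_{(s)}\bigr) = \sum_{d=0}^{s} \mathrm{Ind}_{W_r \times W_s}^{W_a}\bigl(\chi_{\alpha, \beta} \boxtimes \chi_{(d), (s-d)}\bigr),
\end{align*}
and translating each summand via the ring isomorphism into the element $(s_\alpha s_{(d)}) \otimes (s_\beta s_{(s-d)})$ of $\Lambda \otimes \Lambda$. Applying the classical Pieri rule separately in each tensor factor and summing over $d$ produces precisely the bipartitions $(\gamma, \theta)$ described in the statement. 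Multiplicity-freeness is inherited from the classical case, since for any such pair $(\gamma, \theta)$ the splitting index $d = |\gamma| - |\alpha|$ is uniquely determined and each individual Pieri product is multiplicity-free.

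The restriction statement then follows from the induction statement by Frobenius reciprocity: the multiplicity of $\chi_{\alpha, \beta} \boxtimes \chi_{(s)}$ in $\mathrm{Res}^{W_a}_{W_r \times \mathfrak S_s}(\chi_{\gamma, \theta})$ equals the multiplicity of $\chi_{\gamma, \theta}$ in the induced character just computed, so adding horizontal strips to $(\alpha, \beta)$ dualizes to removing them from $(\gamma, \theta)$. The main obstacle is establishing the foundational isomorphism $\mathcal R \simeq \Lambda \otimes \Lambda$ in a form compatible with parabolic induction: this rests on the Frobenius-type construction of each $\chi_{\alpha, \beta}$ as induced from $W_{|\alpha|} \times W_{|\beta|}$ with character $\widetilde{\chi_\alpha} \boxtimes (\widetilde{\chi_\beta} \cdot \varepsilon)$, where $\widetilde{\chi}$ denotes pullback along $W_k \to \mathfrak S_k$ and $\varepsilon$ is the sign character on $(\mathbb Z/2\mathbb Z)^{|\beta|}$, together with the verification that the resulting characteristic map is a ring homomorphism. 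Once this dictionary is set up (as in Chapter 5 of \cite{geck2}), the rest is routine symmetric-function combinatorics.
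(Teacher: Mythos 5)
Your argument is essentially correct, but note that the paper does not prove this proposition at all: it is quoted from \cite{geck2} 6.1.9, so there is no internal proof to compare against. Your route --- Clifford theory for the normal subgroup $(\mathbb Z/2\mathbb Z)^s \trianglelefteq W_s$ giving $\mathrm{Ind}_{\mathfrak S_s}^{W_s}(\chi_{(s)}) = \sum_{d=0}^{s}\chi_{(d),(s-d)}$, induction in stages, and the characteristic-map isomorphism $\bigoplus_{a\geq 0} R_{\mathbb C}(W_a) \simeq \Lambda\otimes\Lambda$ sending $\chi_{\alpha,\beta}$ to $s_\alpha\otimes s_\beta$, which reduces everything to the type $A$ Pieri rule applied in each tensor factor --- is the standard symmetric-function proof of the type $B$ Pieri rule, and your multiplicity-freeness argument (the splitting index $d=|\gamma|-|\alpha|$ is determined by $(\gamma,\theta)$, and each classical Pieri product is multiplicity-free) is sound. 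You also correctly identify the real foundational input, namely the compatibility of the characteristic map with parabolic induction and the Clifford-theoretic construction of $\chi_{\alpha,\beta}$; granting that dictionary, the induction bullet is fully proved.

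One caveat concerns the second bullet. Frobenius reciprocity applied to your induction formula computes the multiplicity of $\chi_{\alpha,\beta}\boxtimes\chi_{(s)}$ in $\mathrm{Res}^{W_a}_{W_r\times\mathfrak S_s}(\chi_{\gamma,\theta})$, i.e.\ it identifies the $W_r$-representation $\mathrm{Hom}_{\mathfrak S_s}\bigl(\chi_{(s)},\mathrm{Res}^{W_a}_{W_r\times\mathfrak S_s}\chi_{\gamma,\theta}\bigr)$ with the stated multiplicity-free sum. This coincides with the literal statement about $\mathrm{Res}^{W_a}_{W_r}$ only when $s\leq 1$, which is the only case in which the paper uses the restriction rule (in the proof of the Proposition of \textbf{6.3}); for $s\geq 2$ the restriction to $W_r$ alone is neither multiplicity-free nor given by removing horizontal strips (for instance $\mathrm{Res}^{W_2}_{W_0}\chi_{(1),(1)} = 2\,\chi_{\emptyset,\emptyset}$). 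This is a defect of the statement as transcribed rather than of your argument, but your write-up should say explicitly that what Frobenius reciprocity yields is the $\chi_{(s)}$-isotypic part of the restriction to $W_r\times\mathfrak S_s$, not the full restriction to $W_r$.
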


\noindent We will use this rule on concrete examples in the sections that follow.

\section{The cohomology of the Coxeter variety for the unitary group}

\paragraph{}\label{Lusztigresults} In this section, we describe the cohomology of the Coxeter varieties for the unitary groups in odd dimension in terms of the classification of unipotent representations that we recalled in the previous section. The cohomology groups are entirely understood by the work of Lusztig in \cite{cox}.\\
Let $t\geq 0$. The \textbf{Coxeter variety} for $\mathrm U_{2t+1}(q)$ is the Deligne-Lusztig variety $X_{\emptyset}(\mathrm{cox})$, where $\mathrm{cox}$ is any Coxeter element of the Weyl group $\mathbf W \simeq \mathfrak S_{2t+1}$. Recall that a Coxeter element is a permutation which can be written as the product, in any order, of exactly one simple reflexion for each $F$-orbit on $\mathbf S$. The variety $X_{\emptyset}(\mathrm{cox})$ does not depend on the choice of the Coxeter element. It is defined over $\mathbb F_{q^2}$ and is equipped with commuting actions of both $\mathrm U_{2t+1}(q)$ and $F^2$.

\begin{notation}
We write $X^t = X_{\emptyset}(\mathrm{cox})$ for the Coxeter variety attached to the unitary group $\mathrm U_{2t+1}(q)$. We also write $\mathrm{H}_c^{\bullet}(X^t)$ instead of $\mathrm{H}_c^{\bullet}(X_{\emptyset}(\mathrm{cox})\otimes \mathbb F,\overline{\mathbb Q_{\ell}})$, where $\ell \not = p$.
\end{notation}

\noindent We first recall known facts on the cohomology of $X^t$ from Lusztig's work.

\noindent \begin{theo}[\cite{cox}]
The following statements hold.
\begin{enumerate}[label=\upshape (\arabic*), topsep = 0pt]
\item The variety $X^t$ has dimension $t$ and is affine. The cohomology group $\mathrm{H}^{t+i}_c(X^t)$ is zero unless $0\leq i \leq t$.
\item The Frobenius $F^2$ acts in a semisimple manner on the cohomology of $X^t$. 
\item The group $\mathrm{H}^{2t}_c(X^t)$ is $1$-dimensional, the unitary group $\mathrm U_{2t+1}(q)$ acts trivially whereas $F^2$ has a single eigenvalue $q^{2t}$.
\item The group $\mathrm H^{t+i}_c(X^t)$ for $0 \leq i < t$ is the direct sum of two eigenspaces of $F^2$, for the eigenvalues $q^{2i}$ and $-q^{2i+1}$. Each eigenspace is an irreducible unipotent representation of $\mathrm{U}_{2t+1}(q)$.
\item If $0 \leq a \leq 2t$, the dimension of the eigenspace of $(-q)^a$ inside the sum $\sum_{i\geq 0} \mathrm H_c^{t+i}(X^t)$ is given by the formula 
$$q^{\frac{(2t-a)(2t+1-a)}{2}}\prod_{j = 1}^{2t-a} \frac{q^{a + j} - (-1)^{a+j}}{q^{j} - (-1)^{j}}.$$
\item The sum $\sum_{i\geq 0} \mathrm H_c^{t+i}(X^t)$ is multiplicity-free as a representation of $\mathrm U_{2t+1}(q)$.
\end{enumerate}
\end{theo}

\paragraph{}\label{2-core2-quotient} We wish to identify these unipotent representations of $\mathrm U_{2t+1}(q)$ occuring in the cohomology of $X^t$. To this purpose, we start by defining the following partitions. If $0\leq a \leq 2t$, we put $\lambda_a^t  := (1 + a, 1^{2t - a})$. Note that $\lambda_0^t = (1^{2t+1})$ and $\lambda_{2t}^t = (2t+1)$. 

\noindent \begin{lem}
For $0 \leq i \leq t$, the $2$-core of $\lambda_{2i}^t$ is $\Delta_1$ and its $2$-quotient is $((1^{t-i}),(i))$.\\
For $0 \leq i < t$, then the $2$-core of $\lambda_{2i+1}^t$ is $\Delta_2$ and its $2$-quotient is $((i),(1^{t-i-1}))$.
\end{lem}

\noindent In particular, according to \ref{twolabels} the irreducible unipotent representation $\rho_{\lambda_{2i}^t}$ of $\mathrm U_{2t+1}(q)$ is equivalent to the representation $\rho_{\Delta_1,(i),(1^{t-i})}$, and $\rho_{\lambda_{2i+1}^t}$ to $\rho_{\Delta_2,(i),(1^{t-i-1})}$.

\begin{proof}
The Young diagram of the partition $\lambda_a^t$ has the following shape. 
\begin{center}
\ydiagram{3,1,1}*[\dots]{3+1}*[\vdots]{0,0,0,1}*{4+1,0,0,0,1}
\end{center}
The first row has an odd number of boxes when $a$ is even, and an even number of boxes when $a$ is odd. To compute the $2$-core, one removes horizontal dominoes from the first row, right to left, and vertical dominoes from the first column, bottom to top. The process results in $\Delta_1$ when $a$ is even and $\Delta_2$ when $a$ is odd. \\
The partition $\lambda_a^t$ has $2t + 1 - a$ non zero terms. Its $\beta$-set is given by the sequence $$\beta = (2t+1,2t-a,2t-a-1,\ldots ,1).$$
Assume that $a = 2i$ is even. Then the sequences $\beta^0$ and $\beta^1$ are given by 
\begin{align*}
\beta^0 = (t-i,t-i-1,\ldots 1), & & \beta^1 = (t,t-i-1,t-i-2,\ldots ,0).
\end{align*}
The sequence $\beta^0$ has length $t-i$ while $\beta^1$ has length $t-i+1$. The associated permutations are then respectively $\mu_0 = (1^{t-i})$ and $\mu_1 = (i)$. Since $2t + 1 - a$ is odd, the $2$-quotient is given by $(\mu_0,\mu_1)$ as claimed.\\
Assume now that $a = 2i+1$ is odd. Then the sequences $\beta^0$ and $\beta^1$ are given by 
\begin{align*}
\beta^0 = (t-i-1,t-i-2,\ldots 1), & & \beta^1 = (t,t-i-1,t-i-2,\ldots ,0).
\end{align*}
The sequence $\beta^0$ has length $t-i-1$ while $\beta^1$ has length $t-i+1$. The associated permutations are then respectively $\mu_0 = (1^{t-i-1})$ and $\mu_1 = (i)$. Since $2t + 1 - a$ is even, the $2$-quotient is given by $(\mu_1,\mu_0)$ as claimed.
\end{proof}

\paragraph{}We may now identify the irreducible unipotent representations occuring in the cohomology of the Coxeter variety $X^k$.

\label{cohomologyCoxeter} \begin{prop}
For $0 \leq i < t$, the cohomology group of the Coxeter variety for the finite unitary group $\mathrm U_{2t+1}(q)$ is given by
$$\mathrm H_c^{t+i}(X^t) = \rho_{\lambda_{2i}^t} \oplus \rho_{\lambda_{2i+1}^t}$$
with the first summand corresponding to the eigenvalue $q^{2i}$ of $F^2$ and the second to $-q^{2i+1}$. Moreover, $\mathrm H_c^{2t}(X^t) = \rho_{\lambda_{2t}^t}$ with eigenvalue $q^{2t}$.
\end{prop}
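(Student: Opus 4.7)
The plan is to combine Lusztig's results recalled in Theorem 6.1 with a direct computation of $\deg(\rho_{\lambda_a^k}^{\mathrm U})$ via the hook length formula. The top-degree cohomology $\mathrm H_c^{2k}(X_{\emptyset}(\mathrm{cox}))$ is handled immediately: by Theorem 6.1 (3), it is one-dimensional with trivial $\mathrm U_{2k+1}(q)$-action and $F^2$-eigenvalue $q^{2k}$, so it equals the trivial representation $\rho_{(2k+1)} = \rho_{\lambda_{2k}^k}$.

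For $0 \leq i < k$, Theorem 6.1 (4) says that $\mathrm H_c^{k+i}(X_{\emptyset}(\mathrm{cox}))$ splits as the direct sum of two irreducible unipotent representations, one for each of the $F^2$-eigenvalues $(-q)^{2i} = q^{2i}$ and $(-q)^{2i+1} = -q^{2i+1}$, and Theorem 6.1 (6) says the whole cohomology is multiplicity-free. To pin down the constituents I would apply Proposition 4.4 to the hook partition $\lambda_a^k = (1+a, 1^{2k-a})$: one has $a(\lambda_a^k) = \tfrac{(2k-a)(2k-a+1)}{2}$, the hook length at the top-left box is $1 + a + (2k-a) = 2k+1$, and the remaining hook lengths are $1, \ldots, a$ along the first row and $1, \ldots, 2k-a$ down the first column. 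Substituting into the hook length formula and collapsing the ratio of products against $\prod_{i=1}^{2k+1}(q^i - (-1)^i)$ yields precisely the Lusztig dimension formula from Theorem 6.1 (5), namely $q^{(2k-a)(2k+1-a)/2} \prod_{j=1}^{2k-a} \frac{q^{a+j} - (-1)^{a+j}}{q^j - (-1)^j}$. The reader can furthermore check that the degrees of $\rho_{\lambda_0^k}, \ldots, \rho_{\lambda_{2k}^k}$ are pairwise distinct.

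The main obstacle is that matching degrees does not by itself identify the representations: a priori there could be other irreducible unipotent representations of $\mathrm U_{2k+1}(q)$ of the same dimension. To resolve this, I would invoke the explicit description in \cite{cox}, where Lusztig identifies the irreducible constituents of the cohomology of the Coxeter variety via the action of an associated Hecke algebra on the Frobenius eigenspaces; for $\mathrm U_{2k+1}$ the resulting characters correspond bijectively to the hook partitions of $2k+1$. Combined with the degree matching above and the multiplicity-freeness of Theorem 6.1 (6), this forces the identification $\mathrm H_c^{k+i}(X_{\emptyset}(\mathrm{cox})) \simeq \rho_{\lambda_{2i}^k} \oplus \rho_{\lambda_{2i+1}^k}$ with the stated eigenvalues. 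As a sanity check on the pairing within a single cohomological degree, one may use Lemma 6.2, which places $\rho_{\lambda_{2i}^k}$ and $\rho_{\lambda_{2i+1}^k}$ in different Harish-Chandra series (corresponding to the $2$-cores $\Delta_1$ and $\Delta_2$ respectively), so they cannot be permuted by an automorphism of $\mathrm U_{2k+1}(q)$ fixing the Frobenius action.
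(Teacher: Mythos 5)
There is a genuine gap at the step you yourself flag as the main obstacle. You resolve it by asserting that \cite{cox} identifies the irreducible constituents of $\mathrm H_c^{\bullet}(X_{\emptyset}(\mathrm{cox}))$ for $\mathrm U_{2k+1}(q)$ as exactly the unipotent representations attached to hook partitions of $2k+1$. Lusztig's paper does not provide this labelling: what it gives is precisely the content of Theorem 6.1 --- vanishing range, semisimplicity and the list of Frobenius eigenvalues, irreducibility of each eigenspace, the dimension formula, and multiplicity-freeness --- all expressed in terms of eigenvalues of $F^2$, not in terms of the partition parametrization of unipotent characters of the unitary group. Translating the eigenspaces into that parametrization is exactly the statement of the proposition, so citing \cite{cox} for the ``hooks'' claim is circular. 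Your degree comparison cannot close the gap on its own either: knowing that the eigenspace for $(-q)^a$ has the same dimension as $\rho_{\lambda_a^k}$ (a computation the paper also records as a consistency check) does not exclude a priori that some non-hook unipotent representation of the same degree occurs instead, which is why you needed the hook claim in the first place. The remaining ingredients of your argument (top cohomology via 6.1(3), pairwise distinctness of the degrees $\deg\rho_{\lambda_a^k}$ through their $q$-valuations $a(\lambda_a^k)=\tfrac{(2k-a)(2k-a+1)}{2}$, and the Harish-Chandra series sanity check via Lemma 6.2) are fine but do not supply the missing identification.

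For comparison, the paper closes this gap by induction on $k$: Lusztig's Corollary 2.10 in \cite{cox} gives an isomorphism of $X^k_{\emptyset}(\mathrm{cox})/V$ with $X^{k-1}_{\emptyset}(\mathrm{cox})\times\mathbb G_m$ for the Levi $\mathrm{GL}_1(q^2)\times\mathrm U_{2k-1}(q)$, which yields an $F^2$-equivariant formula for the Harish-Chandra restriction of each $\mathrm H_c^{k+i}$ in terms of the cohomology of the smaller Coxeter variety (with a Tate twist). Comparing this, eigenvalue by eigenvalue, with the Howlett--Lehrer comparison theorem and the Pieri rule (removing a single box from the bipartition labels) pins down the bipartitions, hence the partitions $\lambda^k_{2i}$, $\lambda^k_{2i+1}$, uniquely; the dimension formula 6.1(5) is only needed once, in the base case $k=1$, to distinguish the trivial from the Steinberg representation. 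If you want to salvage your route without the induction, you would have to actually prove the hook claim, e.g.\ by computing $\langle R_{\mathrm{cox}},\rho_{\lambda}\rangle$ via the Lusztig--Srinivasan formula and the Murnaghan--Nakayama rule and showing it is nonzero exactly for hook partitions --- but that argument is not in your proposal, and it is not in \cite{cox}.
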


\noindent Before going to the proof, one may notice that the statement is consistent with the dimensions. Indeed, the formula given in \ref{Lusztigresults} Theorem (5) coincides with the hook formula for the degree of the representation $\rho^{\mathrm U}_{\lambda^{t}_a}$ given in \ref{dimensionformula} Proposition.

\begin{proof} First, the statement on the highest cohomology group $\mathrm H_c^{2t}(X^t)$ follows from \ref{Lusztigresults} Theorem (3). It is the only cohomology group in the case $t=0$. We will prove the formula by induction on $t$. Let us now assume $t\geq 1$ and that the proposition is known for $t-1$. If $0\leq i \leq t-1$, we know that $\mathrm H_c^{t+i}(X^t)$ is the sum of two irreducible unipotent representations. So let us write 
$$\mathrm H_c^{t+i}(X^t) = \rho_{\mu_i} \oplus \rho_{\nu_i}$$
where $\mu_i$ and $\nu_i$ are two partitions of $2t+1$, and so that $\rho_{\mu_i}$ corresponds to the eigenvalue $q^{2i}$ of $F^2$ whereas $\rho_{\nu_i}$ corresponds to $-q^{2i+1}$.\\
We consider the standard Levi complement $L \simeq \mathrm{GL}_1(q^2) \times \mathrm U_{2t-1}(q) \subset \mathrm U_{2t+1}(q)$. Let $V$ denote the unipotent radical of the standard parabolic subgroup containing $L$. According to \cite{cox} Corollary 2.10, one can build a geometric isomorphism between the quotient variety $X^t/V$ and the product of the Coxeter variety for $L$ and of a copy of $\mathbb G_m$. Even though this geometric isomorphism is not $L$-equivariant, Lusztig proves that the induced map on cohomology is $L$-equivariant. By a discussion similar to that in \ref{decompositionDLvarieties}, the Coxeter variety for $L$ is isomorphic to the Coxeter variety $X^{t-1}$ for $\mathrm U_{2t-1}(q)$. We write ${}^*\mathrm R_{t-1}^{t}$ for the composition of the Harish-Chandra restriction from $\mathrm U_{2t+1}(q)$ to $L$, with the usual restriction from $L$ to the subgroup $\mathrm U_{2t-1}(q)$.  For any nonnegative integer $i$, the $\mathrm{U}_{2t-1}(q), F^2$-equivariant induced map on the cohomology is an isomorphism
\begin{equation}\label{eq}
{}^*\mathrm R_{t-1}^{t} \left(\mathrm H^{t+i}_c(X^t)\right) \simeq \mathrm H^{t-1+i}_c(X^{t-1}) \oplus \mathrm H^{t-1+(i-1)}_c(X^{t-1})(1)\tag{$**$}.
\end{equation}
Here, $(1)$ denotes the Tate twist (the action of $F^2$ on a twist $M(n)$ is obtained from the action on the space $M$ by multiplication with $q^{2n}$). The right-hand side of this identity is given by the induction hypothesis. Let us look at the left-hand side.\\
We fix $0\leq i \leq t-1$ and we denote by $(\Delta_x,\alpha,\beta)$ and by $(\Delta_y,\gamma,\delta)$ the alternative labelling of the representations $\rho_{\mu_i}$ and $\rho_{\nu_i}$ respectively as introduced in \ref{newlabel} and \ref{twolabels}. By the Howlett-Lehrer comparison theorem for restriction in \ref{comparisontheorem} and by the Pieri rule in \ref{Pierirule}, we know that the restriction ${}^*\mathrm R_{t-1}^{t} \left(\rho_{\Delta_x,\alpha,\beta}\right)$ is the multiplicity-free sum of all the representations $\rho_{\Delta_x,\alpha',\beta'}$ where the bipartition $(\alpha',\beta')$ can be obtained from $(\alpha,\beta)$ by removing exactly one box, of either $\alpha$ or $\beta$. The similar description also holds for ${}^*\mathrm R_{t-1}^{t} \left(\rho_{\Delta_y,\gamma,\delta}\right)$.\\
By using \ref{2-core2-quotient} Lemma and the induction hypothesis, we may write down the identity \eqref{eq} explicitely. Moreover, as it is $F^2$-equivariant we can identify the components corresponding to the same eigenvalues on both sides. We distinguish $4$ different cases depending on the values of $t$ and $i$.
\begin{enumerate}[label={--},noitemsep,topsep=0pt]
\item \textbf{Case }$\mathbf{t=1}$. We only need to consider $i=0$. On the right-hand side of \eqref{eq}, the second term is $0$ because $t-1 + (i-1) = -1 < 0$. On the other hand, the first term is $\rho_{\lambda^0_0} \simeq \rho_{\Delta_1,\emptyset, \emptyset}$ and it corresponds to the eigenvalue $(-q)^0 = 1$. By identifying the eigenspaces, we have ${}^*\mathrm R_{0}^{1} \left(\rho_{\Delta_x,\alpha,\beta}\right) \simeq \rho_{\Delta_1,\emptyset,\emptyset}$ and ${}^*\mathrm R_{0}^{1} \left(\rho_{\Delta_y,\gamma,\delta}\right) = 0$. The second equation implies that there is no box to remove from $\gamma$ nor from $\delta$. Thus, $\gamma = \delta = \emptyset$. The value of $y$ is given by the relation $2t+1 = 3 = 2(0+0) + \frac{y(y+1)}{2}$, that is $y=2$. This corresponds to the partition $\nu_0 = \lambda^{1}_1$. We notice in passing that the representation $\rho_{\nu_0}$ is the unique unipotent cuspidal representation of $\mathrm U_3(q)$.\\
As for $\mu_0$, the equation ${}^*\mathrm R_{0}^{1} \left(\rho_{\Delta_x,\alpha,\beta}\right) \simeq \rho_{\Delta_1,\emptyset,\emptyset}$ tells us that there is only one removable box from $(\alpha,\beta)$. After removal of this box, both partitions are empty. Thus, we deduce that $x=1$ and $(\alpha,\beta) = (1,\emptyset) \text{ or } (\emptyset,1)$. This corresponds respectively to $\mu_0 = \lambda^1_2$ or $\mu_0 = \lambda^1_0$. That is, $\rho_{\mu_0}$ is either the trivial or the Steinberg representation of $\mathrm U_3(q)$. We can deduce which one it is by comparing the degree of the representations with the formula of \ref{Lusztigresults} Theorem (5). According to this formula, the dimension of the eigenspace for $(-q)^0$ is $q^3$. This is precisely the degree of the Steinberg representation $\rho_{\lambda^{1}_0}$ as given by the hook formula in \ref{dimensionformula} Proposition, and it excludes the possibility of $\rho_{\mu_0}$ being trivial. Thus, we have $\mu_0 = \lambda^1_0$ as claimed.
\end{enumerate}

\noindent From now, we assume $t\geq 2$.

\begin{enumerate}[label={--},noitemsep,topsep=0pt]
\item \textbf{Case }$\mathbf{i=0}$. On the right-hand side of \eqref{eq}, the second term is $0$ because $t-1 + (i-1) = t-2 < t-1$. The first term is $\rho_{\lambda^{t-1}_0} \oplus \rho_{\lambda^{t-1}_1} \simeq \rho_{\Delta_1,\emptyset,(1^{t-1})} \oplus \rho_{\Delta_2,\emptyset,(1^{t-2})}$. Identifying the eigenspaces, we have ${}^*\mathrm R_{t-1}^{t} \left(\rho_{\Delta_x,\alpha,\beta}\right) \simeq \rho_{\Delta_1,\emptyset,(1^{t-1})}$ and ${}^*\mathrm R_{t-1}^{t} \left(\rho_{\Delta_y,\gamma,\delta}\right) \simeq \rho_{\Delta_2,\emptyset,(1^{t-2})}$. We deduce that $x=1$ and $y=2$. Moreover, it also follows that there is only one removable box in $(\alpha,\beta)$ and in $(\gamma,\delta)$. After removal, we should obtain respectively $(\emptyset,(1^{t-1}))$ and $(\emptyset,(1^{t-2}))$. The only possibility is that $(\alpha,\beta) = (\emptyset,(1^t))$ and $(\gamma,\delta) = (\emptyset,(1^{t-1}))$. This corresponds to $\mu_0 = \lambda^{t}_0$ and $\nu_0 = \lambda^{t}_1$ as claimed.

\item \textbf{Case }$\mathbf{i=t-1}$. On the right-hand side of \eqref{eq}, the first term is $\rho_{\lambda^{t-1}_{2(t-1)}}\simeq \rho_{\Delta_1,(t-1),\emptyset}$ and the second term is $\rho_{\lambda^{t-1}_{2(t-2)}} \oplus \rho_{\lambda^{t-1}_{2(t-2)+1}} \simeq \rho_{\Delta_1,(t-2),(1)} \oplus \rho_{\Delta_2,(t-2),\emptyset}$. Identifying the eigenspaces while taking the Tate twist into account, we have ${}^*\mathrm R_{t-1}^{t} \left(\rho_{\Delta_x,\alpha,\beta}\right) \simeq \rho_{\Delta_1,(t-1),\emptyset} \oplus \rho_{\Delta_1,(t-2),(1)}$ and ${}^*\mathrm R_{t-1}^{t} \left(\rho_{\Delta_y,\gamma,\delta}\right) \simeq \rho_{\Delta_2,(t-2),\emptyset}$. We deduce that $x=1$ and $y=2$. Moreover, there are two removable boxes in $(\alpha,\beta)$ and only one removable box in $(\gamma,\delta)$. After removal of one of the two boxes in $(\alpha,\beta)$, we can get either $((t-1),\emptyset)$ or $((t-2),(1))$ ; and after removal of the box in $(\gamma,\delta)$ we obtain $((t-2),\emptyset)$. The only possibility is that $(\alpha,\beta) = ((t-1),(1))$ and $(\gamma,\delta) = ((t-1),\emptyset)$. This corresponds to $\mu_{t-1} = \lambda^t_{2(t-1)}$ and $\nu_{t-1} = \lambda^{t}_{2(t-1)+1}$ as claimed. 

\item \textbf{Case }$\mathbf{1\leq i \leq t-2}$. On the right-hand side of \eqref{eq}, the first term is $\rho_{\lambda^{t-1}_{2i}} \oplus \rho_{\lambda^{t-1}_{2i+1}} \simeq \rho_{\Delta_1,(i),(1^{t-1-i})} \oplus \rho_{\Delta_2,(i),(1^{t-2-i})}$. The second term is $\rho_{\lambda^{t-1}_{2(i-1)}} \oplus \rho_{\lambda^{t-1}_{2(i-1)+1}} \simeq \rho_{\Delta_1,(i-1),(1^{t-i})} \oplus \rho_{\Delta_2,(i-1),(1^{t-1-i})}$. Identifying the eigenspaces while taking the Tate twist into account, we have ${}^*\mathrm R_{t-1}^{t} \left(\rho_{\Delta_x,\alpha,\beta}\right) \simeq \rho_{\Delta_1,(i),(1^{t-1-i})} \oplus \rho_{\Delta_1,(i-1),(1^{t-i})}$ and ${}^*\mathrm R_{t-1}^{t} \left(\rho_{\Delta_y,\gamma,\delta}\right) \simeq \rho_{\Delta_2,(i),(1^{t-2-i})} \oplus \rho_{\Delta_2,(i-1),(1^{t-1-i})}$. We deduce that $x = 1$ and $y = 2$. Moreover, there are exactly two removable boxes from $(\alpha,\beta)$ and from $(\gamma,\delta)$. After removal of one of the two boxes in $(\alpha,\beta)$, we can get either $((i),(1^{t-1-i}))$ or $((i-1),(1^{t-i}))$ ; and after removal of one of the two boxes in $(\gamma,\delta)$, we can get either $((i),(1^{t-2-i}))$ or $((i-1),(1^{t-1-i}))$. The only possibility is that $(\alpha,\beta) = ((i),(1^{t-i}))$ and $(\gamma,\delta) = ((i),(1^{t-1-i}))$. This corresponds to $\mu_i = \lambda^t_{2i}$ and $\nu_i = \lambda^t_{2i+1}$ as claimed. 
\end{enumerate}
\end{proof}

\section{The cohomology of the variety $X_I(\mathrm{id})$}

\paragraph{}We go on with the computation of the cohomology of the variety $X_I(\mathrm{id})$. We use the same notations as in section 1. We first compute the cohomology of each Ekedahl-Oort stratum $X_{I_t}(w_t)$, before using the spectral sequence associated to the stratification to conclude.\\
Recall that $X_I(\mathrm{id})$ has dimension $d$, is defined over $\mathbb F_{q^2}$ and is equipped with an action of $J \simeq \mathrm{U}_{2d+1}(q)$. As before, we will write $\mathrm{H}_c^{\bullet}(X_I(\mathrm{id}))$ as a shortcut for $\mathrm{H}_c^{\bullet}(X_I(\mathrm{id})\otimes \mathbb F,\overline{\mathbb Q_{\ell}})$.

\noindent \begin{theo} 
The following statements hold. 
\begin{enumerate}[label=\upshape (\arabic*), topsep = 0pt]
		\item The cohomology group $\mathrm H_c^i(X_I(\mathrm{id}))$ is zero unless $0 \leq i \leq 2d$. There is an isomorphism $\mathrm H_c^i(X_I(\mathrm{id})) \simeq \mathrm H_c^{2d-i}(X_I(\mathrm{id}))^{\vee}(d)$ which is equivariant for the actions of $F^2$ and of $\mathrm U_{2d+1}(q)$. 
		\item The Frobenius $F^2$ acts like multiplication by $(-q)^i$ on $\mathrm H_c^i(X_I(\mathrm{id}))$. 
		\item For $0\leq i \leq d$ we have 
		$$\mathrm H_c^{2i}(X_I(\mathrm{id})) = \bigoplus_{s=0}^{\min(i,d - i)} \rho_{(2d + 1 - 2s, 2s)}.$$
		For $0\leq i \leq d - 1$ we have 
		$$\mathrm H_c^{2i+1}(X_I(\mathrm{id})) = \bigoplus_{s=0}^{\min(i,d - 1 - i)} \rho_{(2d - 2s, 2s + 1)}.$$
	\end{enumerate}
\end{theo}

\noindent Thus, in the cohomology of $X_I(\mathrm{id})$ all the representations associated to a Young diagram with at most $2$ rows occur, and there is no other. Such a diagram has the following general shape.
\begin{center}
\ydiagram{4,2}*[\ldots]{4+1,2+1}*{5+1,3+1}
\end{center}
We may rephrase the result by using the alternative labelling of the irreducible unipotent representations as in \ref{twolabels}. The partition $(2d+1-2s,2s)$ has $2$-core $\Delta_1$ and $2$-quotient $(\emptyset,(d-s,s))$ ; whereas the partition $(2d-2s,2s+1)$ has $2$-core $\Delta_2$ and $2$-quotient $((d-1-s,s),\emptyset)$. Thus, according to \ref{twolabels} Proposition, we have 
\begin{align*}
\rho_{(2d + 1 - 2s, 2s)} \simeq \rho_{\Delta_1,(d-s,s),\emptyset}, & & \rho_{(2d - 2s, 2s + 1)} \simeq \rho_{\Delta_2,(d-1-s,s),\emptyset}.
\end{align*}
In particular, all irreducible representations in the cohomology groups of even index belong to the unipotent principal series $\mathcal E(\mathrm U_{2d +1}(q),(L_1,\rho_1))$, whereas all the ones in the groups of odd index belong to the Harish-Chandra series $\mathcal E(\mathrm U_{2d+1}(q),(L_2,\rho_2))$.

\begin{proof}
Point (1) of the statement follows from a general property of the cohomology groups, namely Poincaré duality. It is due to the fact that $X_I(\mathrm{id})$ is projective and smooth. It also implies the purity of the Frobenius $F^2$ on the cohomology : we know at this stage that all eigenvalues of $F^2$ on $\mathrm H_c^i(X_I(\mathrm{id}))$ have complex modulus $q^i$ under any choice of an isomorphism $\overline{\mathbb Q_{\ell}} \simeq \mathbb C$.\\
We prove the points (2) and (3) by explicit computations. As in \ref{2-core2-quotient}, we denote by $\lambda^t_a$ the partition $(1+a, 1^{2t-a})$ of $2t+1$. Let $0\leq t \leq d$. For $0\leq a \leq 2t$ we will write 
$$\mathrm R_{a}^{t} := \mathrm R_{L_{K_{t}}}^{\mathrm U_{2d+1}(q)}\left(\rho_{(d-t)}^{\mathrm{GL}}\boxtimes \rho^{\mathrm U}_{\lambda^{t}_{a}}\right).$$ 
Recall that \ref{isomorphismEOstratum} Proposition gives an isomorphism between the Ekedahl-Oort stratum $X_{I_t}(w_t)$ and the variety $\mathrm U_{2d+1}(q)/U_{K_{t}} \times_{L_{K_{t}}} X^{\mathbf L_{K_{t}}}_{I_{t}}(w_{t})$. It implies that the cohomology of the Ekedahl-Oort stratum is the Harish-Chandra induction of the cohomology of the Deligne-Lusztig variety $X_{I_{t}}^{\mathbf L_{K_{t}}}(w_{t})$. According to \ref{decompositionDLvarieties}, this cohomology is related to that of the Coxeter variety for $\mathrm U_{2t + 1}(q)$. Combining with the formula of \ref{cohomologyCoxeter} Proposition, for $0 \leq i \leq t - 1$ it follows that 
\begin{align*}
\mathrm H_c^{t + i}(X_{I_t}(w_t)) = \mathrm R_{2i}^{t} \oplus \mathrm R_{2i+1}^{t}, & & \mathrm H_c^{2t}(X_{I_t}(w_t)) = \mathrm R_{2t}^{t}.
\end{align*}
The representation $\mathrm R_{a}^{t}$ in this formula is associated to the eigenvalue $(-q)^a$ of $F^2$.\\

\noindent We first compute $\mathrm R_a^{t}$ explicitely. By the combination of the Howhlett-Lehrer comparison theorem in \ref{comparisontheorem} and the Pieri rule for groups of type $B$ as in \ref{Pierirule}, one can compute the Harish-Chandra induction $\mathrm R_a^{t}$ by adding $d-t$ boxes to the bipartition corresponding to the representation $\rho^{\mathrm U}_{\lambda^{t}_a}$ with no two added boxes in the same column. Recall from \ref{2-core2-quotient} Lemma that the representation $\rho_{\lambda_{2i}^{t}}$ of $\mathrm U_{2t+1}(q)$ is equivalent to the representation $\rho_{\Delta_1,(i),(1^{t-i})}$, and that $\rho_{\lambda_{2i+1}^{t}}$ is equivalent to $\rho_{\Delta_2,(i),(1^{t-1-i})}$.\\
In order to illustrate the argument, let us say that we want to add $N$ boxes to a bipartition of the shape as in the figure below, so that no two added boxes lie in the same column. 
\begin{center}
\ydiagram{3}*[\ldots]{3+1}*{4+1} \quad , \quad \ydiagram{1,1,1,0,1}*[\vdots]{0,0,0,1}
\end{center}
We will add $N_1$ boxes to the first diagram and $N_2$ to the second, where $N = N_1 + N_2$. In the first diagram, the only places where we can add boxes are in the second row from left to right, and at the end of the first row. Because no two added boxes must be in the same column, the number of boxes we add on the second row must be at most the number of boxes already lying in the first row. Of course, it must also be at most $N_1$.\\
In the second diagram, the only places where we can add boxes are at the bottom of the first column and at the end of the first row. Because no two added boxes must be in the same column, we can only put up to one box at the bottom of the first column and all the remaining ones will align at the end of the first row.\\
At the end of the process, we will obtain a bipartition of the following general shape. 
\begin{center}
\ydiagram[*(yellow)]{3}*{0,2}*[*(yellow) \dots]{3+1}*[\dots]{0,2+1}*[*(yellow)]{4+1}*{0,3+1}*[\ldots]{5+1}*{6+1} \quad , \quad \ydiagram[*(yellow)]{1,1,1,0,1}*{1+1}*[*(yellow) \vdots]{0,0,0,1}*[\dots]{2+1}*[?]{0,0,0,0,0,1}*{3+1}
\end{center}
We colored in yellow the boxes that were already there before we added new ones. The box with a question mark may or may not be placed there.\\
We now make the result more precise, and write down exactly what the irreducible components of $\mathrm R_{a}^{t}$ are depending on the parity of $a$.

\begin{enumerate}[label={--},noitemsep,topsep=0pt]
\item For $0\leq i \leq t$, the representation $\mathrm R_{2i}^{t}$ is the multiplicity-free sum of all the representations $\rho_{\Delta_1,\alpha,\beta}$ where the bipartition $(\alpha,\beta)$ satisfies, for some $0 \leq x \leq d - t$,
$$\begin{cases}
\alpha = (i + x - s, s) \text{ for some } 0\leq s \leq \min(x,i),\\
\beta = (d - t - x, 1^{t - i}) \text{ or } (d - t - x + 1, 1^{t - i - 1}).
\end{cases}$$
\item For $0\leq i \leq t - 1$, the representation $\mathrm R_{2i+1}^{t}$ is the multiplicity-free sum of all representations $\rho_{\Delta_2,\alpha,\beta}$ where the bipartition $(\alpha,\beta)$ satisfies, for some $0 \leq x \leq d - t$,
$$\begin{cases}
\alpha = (i + x - s, s) \text{ for some } 0\leq s \leq \min(x,i),\\
\beta = (d - t + 1 - x, 1^{t - 1 - i}) \text{ or } (d - t + 2 - x, 1^{t - 2 - i}).
\end{cases}$$
\end{enumerate} 

\noindent In our notations, we used the convention that the partitions $(0)$ and $(1^0)$ are the empty partition $\emptyset$. The integer $x$ corresponds to the number of boxes we add to the first partition. We notice that if $i$ takes the maximal value, there is only one possibility for $\beta$ that is respectively $(d-t-x)$ in the first case and $(d - t + 1 - x)$ in the second case.\\
Recall from \ref{EOstratification} that the variety $X_{I}(\mathrm{id})$ is the union of the Ekedahl-Oort strata $X_{I_t}(w_t)$ for $0\leq t \leq d$ and the closure of the stratum for $t$ is the union of all strata $X_{I_s}(w_s)$ for $s \leq t$. At the level of cohomology, it translates into the following $F^2,\mathrm{U}_{2d+1}(q)$-equivariant spectral sequence 
$$\mathrm E_1^{t,i} : \mathrm H_c^{t+i}(X_{I_t}(w_t)) \implies \mathrm H_c^{t+i}(X_{I}(\mathrm{id})).$$
The first page of the sequence is drawn in the Figure 1, it has a triangular shape.

\begin{figure}[h]
\hspace{-30pt}
\begin{tikzcd}
	\, & \, & \, & \, & \, & \, & \, & \mathrm R_{2d}^{d}\\
	\, & \, & \, & \, & \, & \, & \quad \mathrm R_{2d - 2}^{d-1} \quad \arrow{r} & \mathrm R_{2d - 2}^{d} \oplus \mathrm R_{2d-1}^{d}\\
	\, & \, & \, & \, & \reflectbox{$\ddots$} & \, & \, & \vdots\\
	\, & \, & \quad \mathrm R_4^2 \quad \arrow{r} & \, & \ldots & \arrow{r} & \mathrm R_4^{d - 1} \oplus \mathrm R_5^{d-1} \arrow{r} & \mathrm R_4^{d} \oplus \mathrm R_5^{d}\\
    \, & \quad \mathrm R^1_2 \quad \arrow{r} & \mathrm R^2_2 \oplus \mathrm R^2_3 \arrow{r} & \, & \ldots & \arrow{r} & \mathrm R_2^{d - 1} \oplus \mathrm R_3^{d-1} \arrow{r} & \mathrm R_2^{d} \oplus \mathrm R_3^{d} \\
    \mathrm R_0^0 \arrow{r} & \mathrm R_0^1 \oplus \mathrm R_1^1 \arrow{r} & \mathrm R_0^2 \oplus \mathrm R_1^2 \arrow{r} & \, & \ldots & \arrow{r} & \mathrm R_0^{d-1} \oplus \mathrm R_1^{d-1} \arrow{r} & \mathrm R_0^{d}\oplus \mathrm R_1^{d}
\end{tikzcd}
\caption{The first page of the spectral sequence.}
\end{figure}

\noindent The representation $\mathrm R^{t}_{a}$ corresponds to the eigenvalue $(-q)^a$ of $F^2$ as before. The only eigenvalues of $F^2$ on the $i$-th row of the spectral sequence are $q^{2i}$ and $-q^{2i+1}$. In particular, the eigenvalues on two distinct rows are different. Since the differentials in deeper pages of the sequence map terms from different rows, their $F^2$-equivariance implies that they vanish. Therefore, the sequence degenerates on the second page.\\
Moreover, by the machinery of spectral sequences, for $0\leq k \leq 2d$ there exists a filtration by $\mathrm{U}_{2d+1}(q)\times \langle F^2 \rangle$-modules on $\mathrm H_c^{k}(X_{I}(\mathrm{id}))$ whose graded components are the terms of the second page lying on the anti-diagonal $t+i=k$. Since the group algebra $\overline{\mathbb Q_{\ell}}[\mathrm{U}_{2d+1}(q)]$ is semi-simple, the filtration splits, meaning that $\mathrm H_c^{k}(X_{I}(\mathrm{id}))$ is actually the direct sum of the graded components. The purity of $\mathrm H_c^{k}(X_{I}(\mathrm{id}))$ then implies that all the terms of the second page lying on the anti-diagonal $t+i = k$, which are associated to an eigenvalue whose modulus is not equal to $q^k$, must be zero. Therefore, the second page has the shape described in Figure 2. The Frobenius $F^2$ acts via $q^{2i}$ on the term $\mathrm E_2^{i,i}$, and via $-q^{2i+1}$ on the term $\mathrm E_2^{i+1,i}$. Point (2) of the Theorem readily follows.\\ 

\begin{figure}[h]
\begin{center}
\begin{tikzcd}
	\, & \, & \, & \, & \, & \mathrm E_2^{d,d}\\
	\, & \, & \, & \, & \mathrm E_2^{d-1,d-1} & \mathrm E_2^{d,d-1}\\
	\, & \, & \, & \reflectbox{$\ddots$} & \, & \vdots \\
    \, & \mathrm E_2^{1,1} & \mathrm E_2^{2,1} & 0 & \ldots & 0 \\
	\mathrm E_2^{0,0} & \mathrm E_2^{1,0} & 0 & 0 & \ldots & 0
\end{tikzcd}
\end{center}
\caption{The second page of the spectral sequence.}
\end{figure}

\noindent By the previous computations, we understand precisely all the terms in the first page of the spectral sequence. The key observation to compute the second page is that two terms on the first page which lie on the same row, but are separated by at least $2$ arrows, do not have any irreducible component in common. We make the argument more precise in the following two paragraphs, distinguishing the cohomology groups of even and odd index.\\

\noindent We first compute the cohomology group $\mathrm H_{c}^{2t}(X_{I}(\mathrm{id}))$ for $0\leq t \leq d$. We look at the following portion of the first page 
$$
\begin{tikzcd}
	\mathrm R^{t}_{2t} \arrow{r} & \mathrm R_{2t}^{t+1} \oplus \mathrm R_{2t +1}^{t +1} \arrow{r} & \mathrm R_{2t}^{t + 2} \oplus \mathrm R_{2t +1}^{t +2}
\end{tikzcd}.
$$
By extracting the eigenspaces corresponding to $q^{2t}$, we actually have the following sequence
$$
\begin{tikzcd}
	\mathrm R^{t}_{2t} \arrow{r}{u} & \mathrm R_{2t}^{t+1}  \arrow{r}{v} & \mathrm R_{2t}^{t + 2} \end{tikzcd}.
$$
The representation $\mathrm R^{t}_{2t}$ is the sum of all the representations $\rho_{\Delta_1,\alpha,\beta}$ where for some $0\leq x \leq d - t$ and for some $0\leq s \leq \min(x,t)$, we have $\alpha = (t + x - s, s)$ and $\beta = (d - t - x)$.\\
The representation $\mathrm R^{t+1}_{2t}$ is the sum of all the representations $\rho_{\Delta_1,\alpha',\beta'}$ where for some $0\leq x' \leq d - t - 1$ and for some $0\leq s \leq \min(x',t)$, we have $\alpha' = (t + x' - s, s)$ and $\beta' = (d - t - x')$ or $(d - t - x' - 1, 1)$.\\
The quotient space $\mathrm{Ker}(v)/\mathrm{Im}(u)$ is isomorphic to the eigenspace of $q^{2t}$ in $E_{2}^{t+1,t}$, which is zero. Besides, in the representation $\mathrm R^{t+2}_{2t}$ all the irreducible components have the shape $\rho_{\Delta_1,\alpha'',\beta''}$ with $\beta''$ a partition of length $2$ or $3$. In particular, all the representations $\rho_{\Delta_1,\alpha',\beta'}$ of $\mathrm R^{t+1}_{2t}$ with $\beta'$ a partition of length $1$ automatically lie inside $\mathrm{Ker}(v) = \mathrm{Im}(u)$. Such representations correspond to all the irreducible components $\rho_{\Delta_1,\alpha,\beta}$ of $\mathrm R^{t}_{2t}$ having $x \not = d - t$. Thus, none of them lies in $\mathrm{Ker}(u) \simeq E_2^{t,t}$.\\
The remaining components of $\mathrm R^{t}_{2t}$ are those having $x = d - t$, and they do not occur in the codomain of $u$ so that they lie in $\mathrm{Ker}(u)$. By the previous argument, they must form the whole of $\mathrm{Ker}(u)$.\\
Thus, we have proved that 
$$E_2^{t,t} \simeq \mathrm H_{c}^{2t}(X_{I}(\mathrm{id})) \simeq \mathrm{Ker}(u) = \bigoplus_{s = 0}^{\min(t,d-t)} \rho_{\Delta_1,(t-s,s),\emptyset}$$
and it coincides with the formula of point (3).\\

\noindent We now compute the cohomology group $\mathrm H_{c}^{2t+1}(X_{I}(\mathrm{id}))$ for $0\leq t \leq d-1$. We look at the following portion of the first page 
$$
\begin{tikzcd}
	\mathrm R^{t}_{2t} \arrow{r} & \mathrm R_{2t}^{t+1} \oplus \mathrm R_{2t +1}^{t +1} \arrow{r} & \mathrm R_{2t}^{t + 2} \oplus \mathrm R_{2t +1}^{t +2} \arrow{r} & \mathrm R^{t + 3}_{2t} \oplus \mathrm R^{t + 3}_{2t +1}
\end{tikzcd}.
$$
By extracting the eigenspaces corresponding to $-q^{2t+1}$, we actually have the following sequence 
$$
\begin{tikzcd}
	0 \arrow{r} & \mathrm R_{2t+1}^{t+1}  \arrow{r}{u} & \mathrm R_{2t+1}^{t + 2} \arrow{r}{v} & \mathrm R^{t + 3}_{2t +1} \end{tikzcd}.
$$
The representation $\mathrm R^{t+1}_{2t+1}$ is the sum of all the representations $\rho_{\Delta_2,\alpha,\beta}$ where for some $0\leq x \leq d - t - 1$ and for some $0\leq s \leq \min(x,t)$, we have $\alpha = (t + x - s,s)$ and $\beta = (d - t - x)$.\\
The representation $\mathrm R^{t+2}_{2t+1}$ is the sum of all the representations $\rho_{\Delta_2,\alpha',\beta'}$ where for some $0\leq x' \leq d - t - 2$ and for some $0\leq s \leq \min(x',t)$, we have $\alpha' = (t + x' -s,s)$ and $\beta' = (d - t - 1 - x', 1)$ or $(d - t - x')$.\\
The quotient space $\mathrm{Ker}(v)/\mathrm{Im}(u)$ is isomorphic to the eigenspace of $-q^{2t+1}$ in $E_{2}^{t+2,t}$, which is zero. Besides, in the representation $\mathrm R^{t+3}_{2t+1}$ all the irreducible components have the shape $\rho_{\Delta_2,\alpha'',\beta''}$ with $\beta''$ a partition of length $2$ or $3$. In particular, all the representations $\rho_{\Delta_2,\alpha',\beta'}$ of $\mathrm R^{t+2}_{2t+1}$ with $\beta'$ a partition of length $1$ automatically lie inside $\mathrm{Ker}(v) \simeq \mathrm{Im}(u)$. Such representations correspond to all the irreducible components $\rho_{\Delta_2,\alpha,\beta}$ of $\mathrm R^{t+1}_{2t+1}$ having $x \not = d - t - 1$. Thus, none of them lies in $\mathrm{Ker}(u) \simeq E_2^{t+1,t}$. \\
The remaining components of $\mathrm R^{t+1}_{2t+1}$ are those having $x = d - t - 1$, and they do not occur in the codomain of $u$ so that they lie in $\mathrm{Ker}(u)$. By the argument above, they must form the whole of $\mathrm{Ker}(u)$.\\
Thus, we have proved that 
$$E_2^{t+1,t} \simeq \mathrm H_{c}^{2t+1}(X_{I}(\mathrm{id})) \simeq \mathrm{Ker}(u) = \bigoplus_{s = 0}^{\min(d-t-1, t)} \rho_{\Delta_2,(t-1-s,s),\emptyset}$$
and one may check that it coincides with the formula of point (3).
\end{proof} 

\phantomsection
\printbibliography[heading=bibintoc, title={Bibliography}]
\markboth{Bibliography}{Bibliography}

\end{document}